\theoremstyle{plain}
\newtheorem{theorem}{Theorem}
\newtheorem{lemma}{Lemma}
\newtheorem*{claim}{Claim}
\theoremstyle{definition}
\newtheorem{remark}{Remark}
\newcommand{\RR}{{\mathbb{R}}}
\newcommand{\ZZ}{{\mathbb{Z}}}
\newcommand{\dom}{{\rm dom\,}}
\newcommand{\suppp}{{\rm supp}\sp{+}}
\newcommand{\suppm}{{\rm supp}\sp{-}}
\DeclareMathOperator*{\argmin}{arg\,min}
\newcommand{\DetEq}{\mbox{(\textsc{LP})}}
\newcommand{\Master}{\mbox{(\textsc{Master})}}
\newcommand{\RecS}{(\mbox{\textsc{Sub}}(S))}
\newcommand{\RecT}{(\mbox{\textsc{Sub}}(T))}
\numberwithin{equation}{section}
\title{Relationship of Two Formulations for \\ Shortest Bibranchings}
\author{Kazuo Murota\thanks{Department of Economics and Business Administration, Tokyo Metropolitan University, Tokyo 192-0397, Japan. 
{\tt murota@tmu.ac.jp}} 
\and 
Kenjiro Takazawa\thanks{Department of Industrial and Systems Engineering, Faculty of Science and Engineering, Hosei University, Tokyo 184-8584, Japan.  
{\tt takazawa@hosei.ac.jp}} 
}
\date{June 2017 / July 2018}
\begin{document}

\maketitle

\begin{abstract}
The shortest bibranching problem is a common generalization of 
the minimum-weight edge cover problem in bipartite graphs 
and 
the minimum-weight arborescence problem in directed graphs. 
For the shortest bibranching problem, 
an efficient primal-dual algorithm is given by Keijsper and Pendavingh~(1998), 
and 
the tractability of the problem is ascribed to  
total dual integrality in a linear programming formulation 
by Schrijver (1982). 
Another view on the tractability of this problem is afforded by 
a valuated matroid intersection formulation by Takazawa (2012). 
In the present paper, 
we discuss the relationship between these two formulations 
for the shortest bibranching problem. 
We first demonstrate 
that the valuated matroid intersection formulation 
can be derived from the linear programming formulation
through the Benders decomposition,
where integrality is preserved in the decomposition process
and the resulting convex programming is endowed with discrete convexity.
We then show how a pair of primal and dual optimal solutions of one formulation 
is constructed from that of the other formulation, 
thereby providing a connection between polyhedral combinatorics and discrete convex analysis. 

\medskip

\noindent 
\textbf{Keywords:} 
Polyhedral combinatorics, Discrete convex analysis,
Benders decomposition, 
Integrality, 
Duality
\end{abstract}

\section{Introduction}

The {shortest bibranching problem}, 
introduced in \cite{Sch82} (see also \cite{Sch03}), 
is a common generalization of 
the minimum-weight edge cover problem in bipartite graphs 
and 
the minimum-weight arborescence problem in directed graphs. 
In a directed graph $D=(V,A)$ with vertex set $V$ and arc set $A$,
an arc subset $B \subseteq A$ is called a \emph{branching} 
if 
$B$ does not contain a directed cycle and 
every vertex $v$ has at most one arc in $B$ entering $v$. 
For a vertex $r\in V$, 
a  branching $B$ is called an \emph{$r$-arborescence}
if every vertex $v \in V \setminus \{r\}$ has an arc in $B$ entering $v$.
In an undirected graph $G=(V,E)$ with vertex set $V$ and edge set $E$, 
an edge subset $F \subseteq E$ is an \emph{edge cover} if 
the union of the end vertices of the edges in $F$ is equal to $V$.

The \emph{shortest bibranching problem} is described as follows.
Let $D=(V,A)$ be a directed graph $D=(V,A)$, and  
$\{S,T\}$ be a (nontrivial) partition of the vertex set $V$,
that is, $S$ and $T$ are nonempty disjoint subsets of $V$ such that $S \cup T = V$.
A subset $B \subseteq A$ of arcs is called an \emph{$S$-$T$ bibranching} if, 
in the subgraph $(V,B)$, 
every vertex in $S$ reaches $T$ 
and every vertex in $T$ is reachable from $S$. 
We denote the set of nonnegative integers by $\ZZ_+$. 

\begin{description}
\item[Instance.]
	A directed graph $(V,A)$, 
	a partition $\{S,T\}$ of $V$, 
	and 
	a nonnegative integer arc-weight $w \in \ZZ_+\sp{A}$. 
\item[Objective.]
	Find an $S$-$T$ bibranching $B$ minimizing $w(B) = \sum_{a \in B}w(a)$. 
\end{description}

We denote an arc leaving $u$ and entering $v$ by $uv$. 
We also denote 
$A[S] = \{ uv \in A \colon u,v \in S \}$, 
$A[T] = \{ uv \in A \colon u,v \in T \}$, 
and 
$A[S,T] = \{ uv \in A \colon u \in S, v\in T \}$. 
Throughout this paper,
we assume, without loss of generality, 
that there is no arc $uv$ with $u \in T$ and $v\in S$,
which implies that $A = A[S] \cup A[T] \cup A[S,T]$.

The shortest $S$-$T$ bibranching problem includes,
 as special cases,
the minimum-weight edge cover problem in bipartite graphs 
and the minimum-weight $r$-arborescence problem in directed graphs.
If $A[S] = A[T] = \emptyset$, then 
$D=(V,A)$ is a bipartite graph with color classes $S$ and $T$,
and an $S$-$T$ bibranching 
corresponds exactly to an edge cover in 
this bipartite graph (the underlying undirected bipartite graph, to be  
more precise).
If $S = \{r\}$, 
 an inclusion-wise minimal $S$-$T$ bibranching is exactly an $r$-arborescence, and hence
the minimum-weight $r$-arborescence problem is reduced to the shortest $S$-$T$ bibranching problem.

There are several methods to solve 
the shortest bibranching problem in polynomial time. 
First, 
the total dual integrality of a linear programming formulation is proved by Schrijver~\cite{Sch82}, 
and hence the ellipsoid method works. 
Second, based on this formulation, 
a much faster primal-dual algorithm is given by Keijsper and Pendavingh~\cite{KP98}. 
Third, 
the shortest bibranching problem can be described as the shortest strong connector problem in a 
source-sink connected digraph, 
which can be reduced to the weighted matroid intersection problem (see \cite{Sch03} for details). 
Finally, 
a recent work of Takazawa~\cite{Tak12bibr} shows a polynomial reduction of 
the shortest bibranching problem to 
the valuated matroid intersection problem~\cite{Mur96I,Mur96II}, 
and hence any valuated matroid intersection algorithm can solve the shortest bibranching problem.

These results demonstrate that the shortest 
bibranching problem can be 
understood through the standard framework of polyhedral combinatorics~\cite{Sch03}, 
and a relatively new framework of discrete convex analysis~\cite{Mur03} as well. 
In the present paper, 
we discuss the relationship between 
these two approaches to the shortest 
bibranching problem. 
First, 
we demonstrate 
that the valuated matroid intersection formulation 
can be derived from the linear programming formulation
through the Benders decomposition~\cite{BL11,DT97},
where integrality is preserved in the decomposition process
and the resulting convex programming is endowed with discrete convexity.
In this view the valuated matroid intersection formulation corresponds to  
the master problem and the subproblems%
\footnote{These subproblems correspond to  
\emph{recourse problems} in stochastic programming.
} 
are instances of the minimum-weight $r$-arborescence problem. 
This general understanding naturally leads us to 
a solution algorithm analogous to the Bender decomposition. 
The concave functions representing the objective values of the subproblems
are replaced by valuated matroids, which are discrete analogues of concave functions.
Next we discuss the relationship between the two duality theorems associated with 
the linear programming and valuated matroid intersection  
formulations, 
and show how a pair of primal and dual optimal solutions of one formulation 
is constructed from that of the other formulation.

The organization of this paper is as follows. 
In Section~\ref{SECpre}, 
we recapitulate the two formulations for the shortest $S$-$T$ bibranching problem, 
a linear programming formulation and 
a valuated matroid intersection formulation,
where the emphasis is laid on a clear-cut presentation
of the existing derivation of the latter formulation.
In Section~\ref{SECdiscussion}, 
we point out that the valuated matroid intersection formulation 
can also be derived from the linear programming formulation
through the Benders decomposition, 
which turns out to be compatible with integrality and discrete convexity.
In Section~\ref{SECpot}, 
we exhibit how to construct 
a pair of primal and dual optimal solutions for 
the valuated matroid intersection 
formulation from a pair of primal and dual 
optimal solutions 
for the linear programming formulation. 
Section~\ref{SECcut} shows the converse, 
i.e.,\ 
how to construct a pair of primal and dual optimal solutions 
for the linear programming formulation from 
a pair of primal and dual optimal solutions for 
the valuated matroid intersection formulation.

\section{Existing Two Formulations}
\label{SECpre}

\subsection{Linear programming formulation}
\label{SEClp}

In this section, we review the system of linear inequalities 
describing the shortest $S$-$T$ bibranching problem~\cite{Sch82,Sch03}. 
This system of inequalities is a common generalization of 
that for the minimum-weight edge cover problem in bipartite graphs and 
that for the minimum-weight $r$-arborescence problem. 
The total dual integrality of this system
 forms the basis of our understanding of  
the shortest $S$-$T$ bibranching problem in the framework of polyhedral combinatorics~\cite{Sch03}.

Let $D=(V,A)$ be a directed graph, 
$\{S,T\}$ be a (nontrivial) partition of $V$, 
and $w \in \ZZ_+\sp{A}$ be a nonnegative integer arc-weight vector. 
For $X \subseteq V$, 
let 
$\delta\sp{+}X = \{uv \in A \colon \mbox{$u\in X$, $v\in V \setminus X$}\}$ 
and 
$\delta\sp{-}X = \{uv \in A \colon \mbox{$u\in V \setminus X$, $v\in X$}\}$. 
The following linear program~(P) in variable $x \in \RR\sp{A}$ represents the shortest $S$-$T$ bibranching problem: 
\begin{alignat}{3}
\mbox{(P)}\quad{}&{}\mbox{Minimize}     {}&{}&{}\sum_{a \in A} w(a) x(a) {}&{}&{}\notag\\
&{}\mbox{subject to}\quad   
&{}&{}                           \sum_{a\in \delta\sp{+}S'}x(a) \ge 1 \quad{}&{}&{}
(\emptyset \not= S' \subseteq S), \\
&{}{}&{}&{}\sum_{a\in \delta\sp{-}T'}x(a) \ge 1 \quad{}&{}&{}
(\emptyset \not= T' \subseteq T), \\
&{}&{}&{}                           x(a) \ge 0 \quad{}&{}&{}(a \in A).
\end{alignat}
Described below is 
the dual program~(D) of (P), 
whose variables are 
$y \in \RR\sp{2\sp{S} \setminus \{\emptyset\}}$ and $z \in \RR\sp{2\sp{T} \setminus \{\emptyset\}}$:
\begin{alignat}{3}
\mbox{(D)}\quad{}&{}\mbox{Maximize}     {}&{}&{}
\sum_{\emptyset \not= S' \subseteq S} y(S') 
 +  \sum_{\emptyset \not= T' \subseteq T} z(T'){}&{}&{}
\notag
\\
&{}\mbox{subject to}\quad   {}&{}&{}
\sum_{S' \subseteq S, \ a \in \delta\sp{+}S'}y(S') + 
\sum_{T' \subseteq T, \ a \in \delta\sp{-}T'}z(T') \le w(a) \quad{}&{}&{}(a \in A), 
\label{dual1}\\
&{}&{}&{}                           y(S') \ge 0 \quad{}&{}&{}
(\emptyset \not= S' \subseteq S), 
\label{dual2}\\
&{}&{}&{}                           z(T') \ge 0 \quad{}&{}&{}
(\emptyset \not= T' \subseteq T).
\label{dual3}
\end{alignat}
The complementary slackness conditions for (P) and (D) are as follows: 
\begin{align}
\label{EQx} 
&{}x(a) >0 \Longrightarrow \sum_{S' \colon a \in \delta\sp{+}S'}y(S') + \sum_{T' \colon a \in \delta\sp{-}T'}z(T') = w(a), \\
\label{EQy} 
&{}y(S') > 0 \Longrightarrow \sum_{a\in \delta\sp{+}S'}x(a) = 1 ,\\
\label{EQz} 
&{}z(T') > 0 \Longrightarrow \sum_{a\in \delta\sp{-}T'}x(a) = 1 ,
\end{align}
where $a \in A$ in \eqref{EQx},
$\emptyset \not= S' \subseteq S$ in \eqref{EQy},
and
$\emptyset \not= T' \subseteq T$ in \eqref{EQz}.

\begin{theorem}[Schrijver~\cite{Sch82}, see also \cite{Sch03}]
\label{THdicut}
For an arbitrary integer vector $w \in \ZZ\sp{A}_+$, 
{\rm (P)} and {\rm (D)} have integral optimal solutions. 
\end{theorem}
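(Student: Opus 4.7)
My plan is to prove the stronger statement that the constraint system of (P) is \emph{totally dual integral}: for every integer $w$, (D) has an integer optimal solution. Combined with the integer right-hand sides (all equal to $1$), the Edmonds--Giles theorem then yields integer optima of both (P) and (D).

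The classical approach is via uncrossing. Pick an optimal dual $(y^*, z^*)$ that minimizes the potential $\Phi(y,z) = \sum_{S'} y(S')\,|S'|\,(|S|-|S'|) + \sum_{T'} z(T')\,|T'|\,(|T|-|T'|)$. If two sets $S_1', S_2'$ in $\mathrm{supp}(y^*)$ cross, transfer mass $\epsilon = \min(y^*(S_1'), y^*(S_2'))$ from $\{S_1', S_2'\}$ to $\{S_1' \cap S_2',\, S_1' \cup S_2'\}$. Feasibility of \eqref{dual1} is preserved by the submodular inequality
\[
 \chi_{\delta^+(S_1')}(a) + \chi_{\delta^+(S_2')}(a) \ \ge\ \chi_{\delta^+(S_1' \cap S_2')}(a) + \chi_{\delta^+(S_1' \cup S_2')}(a) \qquad (a \in A),
\]
the objective is unchanged (every right-hand side equals $1$), and $\Phi$ strictly decreases, contradicting minimality. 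The same argument with $\delta^-$ handles $\mathrm{supp}(z^*)$; no cross-family uncrossing is needed because $S$ and $T$ are disjoint. Hence we may assume $\mathrm{supp}(y^*) \subseteq \mathcal{L}_S$ and $\mathrm{supp}(z^*) \subseteq \mathcal{L}_T$ for laminar families $\mathcal{L}_S \subseteq 2^S$, $\mathcal{L}_T \subseteq 2^T$.

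Restricting (D) to variables indexed by $\mathcal{L}_S \cup \mathcal{L}_T$ preserves the optimal value. It suffices to show that the coefficient matrix $M$ of this smaller LP — with rows indexed by arcs and columns by the incidence vectors $\chi_{\delta^+(S')}$ ($S' \in \mathcal{L}_S$) and $\chi_{\delta^-(T')}$ ($T' \in \mathcal{L}_T$) — is totally unimodular. Laminarity implies that for each fixed arc $a = uv$, the $\mathcal{L}_S$-entries of row $a$ form an interval in the chain of $\mathcal{L}_S$-sets through $u$, and symmetrically for $\mathcal{L}_T$. I would then build the rooted arborescence forest encoding $\mathcal{L}_S$, do the same for $\mathcal{L}_T$, join them at a common super-root, and verify that every arc of $A$ corresponds to a directed path in the resulting tree. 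This identifies $M$ with a network matrix, which is TU, so the restricted LP has an integer optimum.

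The main obstacle is this final TU claim, because arcs in $A[S,T]$ tie the two laminar sub-matrices together: such an arc lies in $\delta^+(S')$ for every $S' \in \mathcal{L}_S$ containing its tail and in $\delta^-(T')$ for every $T' \in \mathcal{L}_T$ containing its head, and the path-encoding must bridge the two trees consistently while also correctly handling arcs that stay within $A[S]$ or $A[T]$. If this network-matrix construction proves too delicate, I would instead follow Schrijver's original route in \cite{Sch82}, which phrases the same uncrossing-plus-laminar reduction in the language of cross-free cut families and concludes by an induction on $|V|$ modelled on Edmonds' arborescence argument.
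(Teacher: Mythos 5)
The paper does not prove Theorem~\ref{THdicut} at all: it is quoted from Schrijver~\cite{Sch82,Sch03} and used as a black box, so there is no internal proof to compare yours with. Judged on its own, your plan is the standard proof of this result and is essentially sound: the per-arc inequality $\chi_{\delta^+S_1'}(a)+\chi_{\delta^+S_2'}(a)\ge\chi_{\delta^+(S_1'\cap S_2')}(a)+\chi_{\delta^+(S_1'\cup S_2')}(a)$ is valid, so uncrossing preserves feasibility of \eqref{dual1} and the objective, and your potential $\Phi$ strictly decreases by strict concavity of $t\mapsto t(|S|-t)$; its minimizer over the optimal face exists since $\Phi\ge 0$ there.

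The step you flag as the main obstacle does go through, and the missing ingredient is just the orientation of the joined tree: orient each $\mathcal{L}_S$-tree arc from a set to its parent (towards the super-root $r$) and each $\mathcal{L}_T$-tree arc from the parent to the set (away from $r$). For $a=uv\in A[S]$, the members of $\mathcal{L}_S$ with $a\in\delta^+S'$ are those containing $u$ but not $v$; the members containing $u$ form a chain whose consecutive elements are parent and child, so these sets form an upward directed path on the $S$-side (possibly empty). Symmetrically, for $a\in A[T]$ one gets a downward directed path on the $T$-side. For $a=uv\in A[S,T]$ the relevant columns are all members containing $u$ plus all members containing $v$, i.e.\ the path from the smallest $S$-member containing $u$ up through $r$ and down to the smallest $T$-member containing $v$; thanks to the opposite orientations of the two sides this is again a directed path traversed forward. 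Hence every row of $M$ is the incidence vector of a forward directed path in a directed tree, so $M$ is (the transpose of) a network matrix and is totally unimodular, which finishes your argument without falling back on Schrijver's induction. Two points worth stating explicitly: the theorem tacitly assumes a bibranching exists (otherwise (P) is infeasible and (D) unbounded), and for $w$ with a negative entry the TDI condition is vacuous, so restricting to $w\in\ZZ_+^A$ loses nothing.
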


\subsection{M-convex submodular flow formulation}
\label{SECmsbm}

Another formulation of the shortest $S$-$T$ bibranching problem, 
given in \cite{Tak12bibr}, 
falls in the framework of valuated matroid 
intersection~\cite{Mur96I,Mur96II}. 
This formulation provides a new insight into the shortest $S$-$T$ bibranching problem 
through discrete convex analysis~\cite{Mur03}. 
In this paper we adopt
 a formulation by the M$\sp{\natural}$-convex submodular 
flow problem~\cite{Mur99},
which does not differ essentially from 
the valuated matroid intersection formulation~\cite{Tak12bibr},
but offers a clearer 
correspondence to the linear programming formulation in Section~\ref{SEClp}.

We begin with some definitions.
For a finite set $X$ and 
an integer vector $\eta \in \ZZ\sp{X}$, 
we define
$\suppp(\eta) = \{u \in X \colon \eta(u) >0\}$ 
and 
$\suppm(\eta) = \{u \in X \colon \eta(u) <0\}$. 
For $Y \subseteq X$, 
$\chi_{Y}\in \ZZ\sp{X}$ is 
the characteristic vector of $Y$
defined by $\chi_{Y}(u) =1$ if $u \in Y$ 
and $\chi_{Y}(u)=0$ if 
$u \in X \setminus Y$. 
For $u \in X$, 
$\chi_{\{u\}}$ is abbreviated as $\chi_u$. 
For 
a function $f\colon \ZZ\sp{X} \to \overline{\ZZ}$, 
where 
$\overline{\ZZ} =\ZZ \cup \{+\infty\}$, 
\emph{the effective domain} $\dom f$ of $f$ is defined by 
$\dom f = \{\eta \in \ZZ\sp{X}\colon f(\eta) < + \infty\}$. 
A function $f\colon \ZZ\sp{X} \to \overline{\ZZ}$ is called 
an \emph{M$\sp{\natural}$-convex function}~\cite{Mur03,MS99} if 
it satisfies the following exchange property: 
\begin{quote}
For each $\eta,\zeta \in \ZZ\sp{X}$ and 
$u \in \suppp(\eta - \zeta)$, 
it holds that 
\begin{align}
\label{EQmexc1}
f(\eta - \chi_u) + f(\zeta + \chi_u) \le f(\eta) + f(\zeta)
\end{align}
or there exists $v \in \suppm(\eta - \zeta)$ such that 
\begin{align}
\label{EQmexc2}
f(\eta - \chi_u + \chi_v) + f(\zeta + \chi_u - \chi_v) \le f(\eta) + f(\zeta). 
\end{align}
\end{quote}
A set $D \subseteq \ZZ^X$ is called an \emph{M$^\natural$-convex set} if 
its indicator function $\delta_D\colon \ZZ^X \to \RR\cup \{ + \infty\}$ defined by 
\begin{align*}
\delta_D(\eta) = 
	\begin{cases}
	0 			&(\eta \in D), \\
	+\infty 	&(\eta \not\in D)
	\end{cases}
\end{align*}
is an M$^\natural$-convex function. 
Equivalently, 
a set $D \subseteq \ZZ^X$ is an M$^\natural$-convex set if and only if 
it satisfies the following exchange property: 
\begin{quote}
For each $\eta,\zeta \in D$ and 
$u \in \suppp(\eta - \zeta)$, 
it holds that 
\begin{align}
\label{EQmexc1}
\eta - \chi_u \in D \quad \mbox{and} \quad \zeta + \chi_u\in D
\end{align}
or there exists $v \in \suppm(\eta - \zeta)$ such that 
\begin{align}
\label{EQmexc2}
\eta - \chi_u + \chi_v \in D \quad \mbox{and}\quad \zeta + \chi_u - \chi_v \in D. 
\end{align}
\end{quote}

It is pointed out in 
Takazawa~\cite{Tak11ipco,Tak14} that 
discrete convexity 
inherent in 
branchings follows from the arguments in Schrijver~\cite{Sch00}.  
A further connection of $S$-$T$ bibranchings to discrete convex analysis is revealed in \cite{Tak12bibr}. 
In the following,
we summarize the arguments in \cite{Tak11ipco,Tak12bibr,Tak14} 
and exhibit an M$\sp{\natural}$-convex submodular flow formulation 
to highlight
the discrete convexity in the shortest $S$-$T$ bibranching problem.

For the M$\sp{\natural}$-convex submodular flow formulation,
it is convenient to 
regard a (shortest) $S$-$T$ bibranching as a discrete system 
consisting of three components,
a branching, a cobranching, and a bipartite edge cover,
where 
a \emph{cobranching}
means an arc subset such that
the reversal of its arcs is a branching. 
For a precise formulation, we need some notations.

For a digraph $D=(V,A)$ and a partition $\{S,T\}$ of $V$, 
denote the subgraphs induced by $S$ and $T$, respectively, as 
$D[S]$ and $D[T]$, that is, 
$D[S] =(S, A[S])$ and $D[T] = (T, A[T])$.
For $B \subseteq A$, 
denote 
$B[S] = \{uv \in B \colon u,v\in S\}$, 
$B[T] = \{uv \in B \colon u,v\in T\}$, 
and 
$B[S,T] = \{uv \in B \colon \mbox{$u\in S$}, \mbox{$v\in T$}\}$. 
For an arc set $F \subseteq A[S,T]$, 
define 
$\partial\sp{+}F \in \ZZ\sp{S}$ and 
$\partial\sp{-}F \in \ZZ\sp{T}$ by
\begin{align*}
\partial\sp{+}F(u) = |F \cap \delta\sp{+}u| \quad(u \in S), \\
\partial\sp{-}F(v) = |F \cap \delta\sp{-}v| \quad(v \in T), 
\end{align*}
respectively, 
where 
$\delta\sp{+}u = \{uv \in A \colon v\in V \setminus \{u\}\}$ 
and 
$\delta\sp{-}v = \{uv \in A \colon u\in V \setminus \{v\}\}$. 
For a branching $B_T$
in $D[T]$,  let $R(B_T)$ 
denote the set of vertices in $T$ which no arc in $B_T$ enters. 
For a cobranching $B_S$ in $D[S]$, 
let $R\sp{*}(B_S)$ 
denote the set of vertices in $S$ which no arc in $B_S$ leaves.

Then we can say that an arc subset
$B \subseteq A$ is an \emph{$S$-$T$ bibranching} 
if 
$B[S]$ is a cobranching 
with $R\sp{*}(B[S])=\suppp(\partial\sp{+}B[S,T])$ 
and $B[T]$ is a branching with $R(B[T])=\suppp(\partial\sp{-}B[S,T])$. 
Equivalently, 
$B \subseteq A$ is an $S$-$T$ bibranching if 
$B[S]$ is a cobranching in $D[S]$, 
$B[T]$ is a branching in $D[T]$, 
and 
$B[S,T]$ is an edge cover in the graph $D[R\sp{*}(B[S]), R(B[T])]$. 
This definition slightly differs from that in \cite{Sch82}: 
here $B[S]$ should be a cobranching 
and $B[T]$ should be a branching, 
which is not necessarily the case in the definition in \cite{Sch82}. 
However, 
we may 
naturally 
adopt this alternative definition 
as long as we consider the shortest $S$-$T$ bibranching problem.

If we first specify $F \subseteq A[S,T]$ as 
the intersection of $A[S,T]$ and our $S$-$T$ bibranching, 
then arcs in $A[T]$ to be added to $F$ should form a branching $B_T$ in $D[T]$ such that 
$R(B_T) = \suppp(\partial\sp{-}F)$. 
Similarly, a cobranching $B_S \subseteq A[S]$ 
satisfying $R\sp{*}(B_S) = \suppp(\partial\sp{+}F)$ 
should be added to $F$. 
Then an $S$-$T$ bibranching $B$ is obtained as
$B = F \cup B_{S} \cup B_{T}$.
The minimum weights of $B_T$ and $B_S$ are 
expressed respectively by the functions $g_T\colon \ZZ\sp{T} \to \overline{\ZZ}$ 
and $g_S\colon \ZZ\sp{S} \to \overline{\ZZ}$ defined as follows.
The effective domain $\dom g_T$ is defined as 
\begin{align*}
\dom g_T = \{\eta \in \ZZ_+\sp{T}\colon 
\mbox{there is a branching $B_T$ in $D[T]$ with $R(B_T) = \suppp(\eta)$}\}, 
\end{align*}
and, 
for $\eta \in \dom g_T$, 
the function value $g_T(\eta)$ is defined as 
\begin{align}
g_T(\eta) = 
	\min \{ w(B_T) : \mbox{$B_T$ is a branching in $D[T]$, 
        $R(B_T)= \suppp(\eta)$}\} .
\label{EQgTdef}
\end{align}
Similarly, 
we define $g_S\colon \ZZ\sp{S} \to \overline{\ZZ}$ by 
\begin{multline}
\dom g_S = \{\eta \in \ZZ_+\sp{S}\colon 
\mbox{there is a cobranching $B_S$ in $D[S]$ with $R\sp{*}(B_S) = \suppp(\eta)$}\}, \\
\shoveleft
g_S(\eta) = 
	\min \{ w(B_S) : \mbox{$B_S$ is a cobranching in $D[S]$, $R\sp{*}(B_S)= \suppp(\eta)$}\} 
	\\ 
	(\eta \in \dom g_S).  
\label{EQgSdef}
\end{multline}
With $\xi \in \{0,1\}\sp{A[S,T]}$
to represent $F \subseteq A[S,T]$,
the shortest $S$-$T$ bibranching problem is described 
by the following nonlinear optimization problem: 
\begin{align}
\label{EQmsf}
\mbox{(MSF)}
\quad
\mbox{Minimize}\ \  
w(\xi) + g_{S}(\partial \xi|_S) + g_{T}(-\partial \xi|_T)  ,
\end{align}
where 
$w(\xi) = \sum_{a \in A[S,T]} w(a) \xi(a)$, 
and $\partial \xi|_S \in \ZZ\sp{S}$ and $\partial \xi|_T \in \ZZ\sp{T}$ denote the 
restrictions to $S$ and $T$, respectively, of $\partial\xi \in \ZZ\sp{S\cup T}$ 
defined by
\begin{align*}
\partial \xi (v) = |\{ a \colon \xi(a)=1, a \in \delta\sp{+}v \}| 
             - |\{ a \colon \xi(a)=1, a \in \delta\sp{-}v \}| 
\quad (v \in S \cup T).	
\end{align*}

Discrete convexity inherent in the shortest $S$-$T$ bibranching problem
is shown in the following theorem.

\begin{theorem}[Takazawa \cite{Tak12bibr}]
\label{THMconv}
Functions $g_S$ in  \eqref{EQgSdef}
and $g_T$ in \eqref{EQgTdef} 
are M${}\sp{\natural}$-convex functions. 
Thus, the shortest $S$-$T$ bibranching problem is formulated as the 
M$\sp{\natural}$-convex submodular flow problem 
{\rm (MSF)} in \eqref{EQmsf}.
\end{theorem}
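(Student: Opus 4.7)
The plan is to first establish M$\sp{\natural}$-convexity of $g_T$, then deduce the analogous property for $g_S$ by an arc-reversal symmetry, and finally read off the M$\sp{\natural}$-convex submodular flow formulation directly from the structure of \eqref{EQmsf}. Since $g_T(\eta)$ depends on $\eta$ only through $\suppp(\eta)$, I would first focus on the underlying set function
\[
h_T(R) = \min\{w(B_T) : B_T \text{ is a branching in } D[T] \text{ with } R(B_T) = R\} \quad (R \subseteq T),
\]
and verify the M$\sp{\natural}$-exchange property on the characteristic vectors $\chi_R$. The key ingredient is the matroid structure of the set of branchings in $D[T]$ with a prescribed root set: each non-root vertex receives exactly one entering arc, so such branchings correspond to bases of a suitable matroid. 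Combining this with Edmonds' optimality criterion for minimum-weight branchings should yield the required exchange between two optimal branchings with root sets $R_1$ and $R_2$.

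Once $h_T$ is shown to be M$\sp{\natural}$-convex on characteristic vectors, I would lift this to $g_T$ on $\ZZ_+\sp{T}$. Because $g_T(\eta) = h_T(\suppp(\eta))$ is flat along positive directions inside the support, the exchange condition~\eqref{EQmexc1}--\eqref{EQmexc2} reduces either to a trivial case in which the exchange stays within the same support (leaving both values unchanged) or to a genuine exchange between two distinct supports, which is handled by the set-function argument. For $g_S$, I would reverse every arc in $D[S]$, observe that a cobranching in $D[S]$ is exactly a branching in the reversed subgraph and that $R\sp{*}(B_S)$ becomes its root set, and invoke the result for $g_T$ verbatim.

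With $g_S$ and $g_T$ established as M$\sp{\natural}$-convex, the objective in \eqref{EQmsf} is the sum of the linear (hence trivially M$\sp{\natural}$-convex) term $w(\xi)$ in $\xi \in \{0,1\}\sp{A[S,T]}$ and two M$\sp{\natural}$-convex terms $g_S(\partial\xi|_S)$ and $g_T(-\partial\xi|_T)$ coupled to $\xi$ through the boundary operator $\partial$. This is precisely the canonical form of the M$\sp{\natural}$-convex submodular flow problem of Murota~\cite{Mur99}, so the second assertion of the theorem follows at once.

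The main obstacle is the exchange property for $h_T$. Verifying it requires, given two optimal branchings $B_T\sp{1}, B_T\sp{2}$ with root sets $R_1, R_2$ and a discrepancy vertex $u \in R_1 \setminus R_2$, producing either a modified pair whose root sets are $R_1 \setminus \{u\}$ and $R_2 \cup \{u\}$, or some $v \in R_2 \setminus R_1$ enabling a simultaneous swap, while controlling the total weight. I expect to carry this out by decomposing the symmetric difference $B_T\sp{1} \triangle B_T\sp{2}$ into directed paths and cycles and rerouting one such path, following the augmenting-path style arguments already developed for branchings in Schrijver~\cite{Sch00} and refined in Takazawa~\cite{Tak11ipco,Tak14}.
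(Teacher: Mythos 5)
Your overall architecture (a set function $h_T$ on root sets, a lift to $\ZZ_+\sp{T}$, arc reversal for $g_S$, and reading off (MSF) from the form of \eqref{EQmsf}) parallels the paper, but two steps have genuine gaps. The first is the lifting step: your dichotomy ``exchange within the same support'' versus ``genuine exchange between two distinct supports'' is not exhaustive. For $\eta,\zeta\in\dom g_T$ and $u\in\suppp(\eta-\zeta)$ there is the mixed case $\eta(u)\ge 2$, $\zeta(u)=0$, where $\suppp(\eta-\chi_u)=\suppp(\eta)$ but $\suppp(\zeta+\chi_u)=\suppp(\zeta)\cup\{u\}$; moreover $\suppm(\eta-\zeta)$ may be empty (e.g.\ $\eta=2\chi_u+\chi_v$, $\zeta=\chi_v$), so \eqref{EQmexc1} is forced and amounts to the monotonicity $h_T(R\cup\{u\})\le h_T(R)$. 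This monotonicity is \emph{not} a consequence of the M$\sp{\natural}$-exchange property of $h_T$ on $\{0,1\}$-vectors; the paper proves it directly by deleting from an optimal branching the arc entering $u$, which uses the nonnegativity of $w$. You need to add this case and this hypothesis explicitly, or your lift from $h_T$ to $g_T$ fails.

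The second, more serious gap is that the crux of the theorem---the weighted exchange for $h_T$---is only sketched, and the sketched route is doubtful. Branchings in $D[T]$ with a prescribed root set are not the bases of a single matroid (they are common bases of an in-degree partition matroid and a graphic matroid), so ``bases of a suitable matroid'' plus Edmonds' optimality criterion does not deliver the valuated exchange; and rerouting one path of the symmetric difference $B_T\sp{1}\,\triangle\, B_T\sp{2}$ need not preserve acyclicity or the in-degree constraints, nor is that how \cite{Sch00} argues. The working argument, which the paper isolates as Lemmas~\ref{LEMsource} and~\ref{LEMexchange}, operates on the whole arc \emph{multiset} $B_\eta\cup B_\zeta$: by Edmonds' disjoint branchings theorem \cite{Edm73} this union can be repartitioned into two branchings whose root sets are $R(B_\eta)\setminus\{u\}$ and $R(B_\zeta)\cup\{u\}$, or the same sets with an additional swap of some $v\in R(B_\zeta)\setminus R(B_\eta)$ chosen from the source component of $u$. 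Because the two new branchings partition exactly the same multiset of arcs, the total weight is preserved, which is precisely what \eqref{EQmexc1}/\eqref{EQmexc2} require; a symmetric-difference argument gives you no such exact weight accounting. Unless you prove an exchange lemma of this strength (or cite it), your proof of M$\sp{\natural}$-convexity of $g_T$, and hence of $g_S$, is incomplete.
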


We often refer to $\xi \in \{0,1\}\sp{A[S,T]}$ as a \emph{flow}, 
and a flow $\xi$ is said to be \emph{feasible} 
if 
$\partial \xi|_S \in \dom g_S$ 
and 
$-\partial \xi|_T \in \dom g_T$. 
That is, 
$\xi \in \{0,1\}\sp{A[S,T]}$ is feasible if there exist a cobranching $B_S$ in $D[S]$ 
with $R\sp{*}(B_S)=\suppp(\partial\xi|_S)$ and 
a branching $B_T$ in $D[T]$ with $R(B_T)=\suppp(-\partial\xi|_T)$.

\begin{remark}
Note that $\partial \xi$ may not be a $\{0,1\}$-vector,
though $\xi$ itself is a $\{0,1\}$-vector.
Hence 
the domains of $g_S$ and $g_T$ should not be 
restricted to sets of $\{0,1\}$-vectors, 
but 
they are sets of integers.
Therefore, 
in this formulation, 
the framework 
of valuated matroids is not general enough, 
and 
that of M$\sp{\natural}$-convex functions is necessary.  
With some further argument
Takazawa~\cite{Tak12bibr} reduced the formulation~(MSF) to 
the
\emph{valuated matroid intersection problem}~\cite{Mur96I,Mur96II}
so 
that both the original shortest $S$-$T$ bibranching problem and 
the resulting valuated matroid intersection problem 
can be defined on $\{0,1\}$-vectors. 
In this paper, however, we adopt
the M$\sp{\natural}$-convex submodular flow formulation~(MSF) 
in order to make the whole logic clearer. 
\hfill $\Box$
\end{remark}

We now show the proof of Theorem~\ref{THMconv} by clarifying the 
arguments scattered in \cite{Tak11ipco,Tak12bibr,Tak14}. 
The matroidal nature of branchings 
(M$\sp{\natural}$-convexity of $\dom g_T$, to be specific) 
is first noted in \cite{Tak11ipco}. 
For a digraph $D=(V,A)$, 
a \emph{source component} $K$ in $D$ is a strong component such that no arc in $A$ enters $K$, 
where  
we identify a component $K$ and its vertex set 
and denote either of them by $K$. 
It is not difficult to see that, 
for $U \subseteq V$, 
there exists a branching $B$ with $R(B)=U$ 
if and only if 
$U \cap K \neq \emptyset$ for every source component $K$,
where 
$R(B)$ denotes the set of vertices without entering arcs in $B$. 
Hence, 
$\{V \setminus R(B) \colon \mbox{$B$ is a branching in $D$}\}$ 
is an independent set of a partition matroid, 
and 
thus 
$\{\eta \in \ZZ\sp{V} : \mbox{$B$ is a branching in $D$, $R(B)= \suppp(\eta)$}\}$ 
is an M$\sp{\natural}$-convex set (g-matroid).

To prove Theorem \ref{THMconv}, 
we need a stronger exchange property of branchings: 
the arc sets of branchings also have an exchange property. 
First, 
the following lemma is derived from Edmonds' disjoint branchings theorem \cite{Edm73}. 
\begin{lemma}[\cite{Sch00}]
\label{LEMsource}
Let $D=(V,A)$ be a digraph, 
and 
$B_1,B_2$ be branchings partitioning $A$. 
For $R_1',R_2'\subseteq V$ satisfying 
$R_1' \cup R_2' = R(B_1) \cup R(B_2)$ and  
$R_1' \cap R_2' = R(B_1) \cap R(B_2)$, 
the arc set $A$ can be partitioned into branchings $B_1'$ and $B_2'$ 
such that $R(B_1')=R_1'$ and $R(B_2') = R_2'$ 
if and only if 
$K \cap R_1' \neq \emptyset$ and $K \cap R_2' \neq \emptyset$ 
for every source component $K$. 
\end{lemma}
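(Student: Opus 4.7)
The plan is to derive the lemma from Edmonds' disjoint branchings theorem~\cite{Edm73} in the form that characterises, for a digraph $D=(V,A)$ and prescribed subsets $R_1,R_2\subseteq V$, when $A$ can be partitioned into branchings $B_1',B_2'$ with $R(B_i')=R_i$: namely, the condition is $|R_1|+|R_2| = 2|V|-|A|$ together with the cut inequality $|\delta^-(U)| \ge |\{i \in \{1,2\} : R_i\cap U = \emptyset\}|$ for every nonempty $U\subseteq V$. Once this theorem is taken for granted, the whole content of the lemma reduces to verifying that these two conditions, for $R_i=R_i'$, are equivalent to the source-component intersection hypothesis.

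The necessity direction is immediate: for any branching $B$ and any source component $K$, the arcs of $B$ with both endpoints in $K$ form a branching on the finite set $K$, which therefore has some vertex $v\in K$ with no entering arc in this restriction; since $K$ is a source component, no arc of $A$ enters $K$ from outside, so $v$ has no entering arc in $B$ at all, and hence $v\in R(B)\cap K$. Applied to $B_1'$ and $B_2'$, this yields the ``only if'' implication.

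For sufficiency, the cardinality condition follows at once from the inclusion--exclusion identity together with the hypotheses $R_1'\cup R_2' = R(B_1)\cup R(B_2)$, $R_1'\cap R_2' = R(B_1)\cap R(B_2)$, and $|R(B_1)|+|R(B_2)| = 2|V|-|A|$. For the cut inequality on a fixed nonempty $U\subseteq V$, I would split by $k(U) := |\{i : R_i'\cap U = \emptyset\}|$. The case $k(U)=0$ is trivial. If $k(U)=2$, then $U$ is disjoint from $R(B_1)\cup R(B_2)$, and Edmonds' necessary condition applied to the existing partition $(B_1,B_2)$ gives $|\delta^-(U)|\ge 2$. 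If $k(U)=1$ and $|\delta^-(U)|=0$, then $U$ is closed under taking predecessors in the DAG of strong components and must contain some source component $K$ of $D$; but by hypothesis $K\cap R_1'\neq\emptyset$ and $K\cap R_2'\neq\emptyset$, contradicting $k(U)=1$.

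The only nontrivial link in this chain, and hence the expected main obstacle, is precisely this last step: the observation that a set $U$ with $|\delta^-(U)|=0$ must contain a complete source component of $D$, which is what actually connects the source-component hypothesis to Edmonds' cut inequality. Everything else in the argument is bookkeeping and an appeal to the branching theorem.
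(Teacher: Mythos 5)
Your proof is correct, and it follows exactly the route the paper intends: the paper does not prove Lemma~\ref{LEMsource} itself but cites \cite{Sch00} and notes that it is derived from Edmonds' disjoint branchings theorem \cite{Edm73}, which is precisely your reduction (the arc-count condition plus the cut condition for $(R_1',R_2')$, with the source-component hypothesis verifying the cuts via the case analysis on $k(U)$ and the observation that a nonempty $U$ with $\delta\sp{-}U=\emptyset$ contains an entire source component). No gaps; the step you flag as the nontrivial link is argued correctly.
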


The next lemma, 
which follows from Lemma \ref{LEMsource}, 
describes the exchange property of the arc sets of branchings. 

\begin{lemma}[\cite{Sch00}, see also \cite{Tak12bibr,Tak14}]
\label{LEMexchange}
Let $D=(V,A)$ be a digraph, 
$B_1$ and $B_2$ be branchings partitioning $A$, 
and 
$s \in R(B_1) \setminus R(B_2)$. 
Then, 
there exist branchings $B_1'$ and $B_2'$ which partition $A$ and 
satisfy that 
\begin{itemize}
\item
$R(B_1')= R(B_1) \setminus \{s\}$ and 
$R(B_2')= R(B_2) \cup \{s\}$, 
or 
\item
there exists $t \in R(B_2) \setminus R(B_1)$ such that 
$R(B_1')= (R(B_1) \setminus \{s\}) \cup \{t\}$ and  
$R(B_2')= (R(B_2) \cup \{s\})\setminus \{t\}$. 
\end{itemize}
\end{lemma}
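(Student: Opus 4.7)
The plan is to prove this exchange property by a single invocation of Lemma \ref{LEMsource}, with a case distinction that mirrors the two alternatives in the statement. The key is to prescribe target root-sets $R_1', R_2'$ that preserve $R(B_1) \cup R(B_2)$ and $R(B_1) \cap R(B_2)$, and then check whether the source-component condition of Lemma \ref{LEMsource} is met; when it is not, a local repair identifies the exchange partner $t$.

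Concretely, I first try the single-vertex move with $R_1' := R(B_1) \setminus \{s\}$ and $R_2' := R(B_2) \cup \{s\}$. Since $s \in R(B_1) \setminus R(B_2)$, one checks directly that $R_1' \cup R_2' = R(B_1) \cup R(B_2)$ and $R_1' \cap R_2' = R(B_1) \cap R(B_2)$. For a source component $K$, the condition $K \cap R_2' \neq \emptyset$ is automatic, because no arc of $A$ enters $K$ and thus some vertex of $K$ must have no entering arc in the branching $B_2$. The condition $K \cap R_1' \neq \emptyset$ fails only when $K \cap R(B_1) = \{s\}$; if no source component has this form, then Lemma \ref{LEMsource} immediately delivers branchings realizing the first conclusion.

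Otherwise, fix a source component $K$ with $K \cap R(B_1) = \{s\}$. By the same source-component argument $K \cap R(B_2) \neq \emptyset$, but this intersection cannot be $\{s\}$, because $s \notin R(B_2)$; hence there is $t \in K \cap R(B_2)$ with $t \neq s$, and $t \notin R(B_1)$ since $K \cap R(B_1) = \{s\}$, so $t \in R(B_2) \setminus R(B_1)$. I then reapply Lemma \ref{LEMsource} with $R_1'' := (R(B_1) \setminus \{s\}) \cup \{t\}$ and $R_2'' := (R(B_2) \cup \{s\}) \setminus \{t\}$. The required identities on union and intersection hold once more because $s \in R(B_1) \setminus R(B_2)$ and $t \in R(B_2) \setminus R(B_1)$. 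For the source-component condition, any source component $K^* \neq K$ contains neither $s$ nor $t$ (both lie in $K$, and strong components are disjoint), so $K^* \cap R_i'' = K^* \cap R(B_i) \neq \emptyset$ for $i=1,2$; and $K$ itself meets $R_1''$ at $t$ and meets $R_2''$ at $s$. Lemma \ref{LEMsource} then yields the branchings required for the second conclusion.

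The main obstacle I expect is precisely the choice of $t$: one must argue that the ``bad'' source component $K$, which has failed the $R_1'$-test, still furnishes a usable element of $R(B_2) \setminus R(B_1)$. The decisive observation is that $s \notin R(B_2)$ forces $K \cap R(B_2)$ to contain a vertex different from $s$; this is the only place where the hypothesis $s \in R(B_1) \setminus R(B_2)$ is used in its full strength, and without it the recovery into the second case could break down.
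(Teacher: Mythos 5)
Your proof is correct and follows essentially the same route as the paper: both reduce the statement to Lemma~\ref{LEMsource} applied to the exchanged root sets, first trying $R(B_1)\setminus\{s\}$ and $R(B_2)\cup\{s\}$ and repairing with a vertex $t$ of the offending source component when the source-component condition fails. The only cosmetic difference is that the paper splits cases on whether the strong component containing $s$ is a source component and takes $t$ to be the root of the $B_2$-tree containing $s$, whereas you take an arbitrary $B_2$-root $t$ in the bad component $K$ and deduce $t\in R(B_2)\setminus R(B_1)$ directly from $K\cap R(B_1)=\{s\}$ and $s\notin R(B_2)$.
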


\begin{proof}
Let $K$ be the strong component containing $s$. 
If $K$ is a source component, 
then let $t$ be the root of the directed tree in $B_2$ containing $s$, 
and 
define 
$R_1' = (R(B_1) \cup \{s\}) \setminus \{t\}$ 
and 
$R_2' = (R(B_2) \setminus \{s\}) \cup \{t\}$. 
Note that $t \in K$ and $t \in R(B_2) \setminus R(B_1)$. 
Otherwise, 
define 
$R_1' = R(B_1) \cup \{s\}$ 
and 
$R_2' = R(B_2) \setminus \{s\}$. 
Then the claim follows from Lemma~\ref{LEMsource}. 
\end{proof}

We are now ready to show a proof for Theorem \ref{THMconv}.

\begin{proof}[Proof for Theorem~\ref{THMconv}]
It suffices to deal with $g_T$, 
since the M$\sp{\natural}$-convexity of $g_S$ is proved similarly. 
Let $\eta, \zeta \in \dom g_T$, 
and 
let $u \in \suppp(\eta - \zeta)$. 

If $\zeta(u) \ge 1$, 
then 
$\suppp(\eta - \chi_u) = \suppp(\eta)$ and 
$\suppp(\zeta + \chi_u) = \suppp(\zeta)$, 
which imply 
$g_T(\eta - \chi_u) = g_T(\eta)$ 
and 
$g_T(\zeta + \chi_u) = g_T(\zeta)$. 
Hence  
$g_T(\eta - \chi_u) + g_T(\zeta + \chi_u) \le g_T(\eta)+g_T(\zeta)$ 
in \eqref{EQmexc1}
holds with equality. 

If $\eta(u) \ge 2$ and $\zeta(u)=0$, 
then 
$\suppp(\eta - \chi_u) = \suppp(\eta)$ and 
$\suppp(\zeta + \chi_u) = \suppp(\zeta) \cup \{u\}$, 
which imply 
$g_T(\eta - \chi_u) = g_T(\eta)$ and  
$g_T(\zeta + \chi_u) \le g_T(\zeta)$. 
The latter is derived as follows. 
Let $B_{\zeta}$ be a branching in $D[T]$ yielding $g_T(\zeta)$, 
i.e., 
$R(B_{\zeta})=\suppp(\zeta)$ and $w(B_{\zeta})=g_T(\zeta)$. 
Now $\zeta(u)=0$ implies $u \in T \setminus R(B)$, 
i.e.,  
$B_{\zeta}$ has an arc $a$ entering $u$. 
Then, 
$B'_{\zeta} = B_{\zeta} \setminus \{a\}$ is a branching with $R(B'_{\zeta}) = \suppp(\zeta+ \chi_u)$, 
and thus 
$g_T(\zeta + \chi_u)\le w(B'_{\zeta}) = w(B_{\zeta}) - w(a) \le w(B_{\zeta}) = g_T(\zeta)$, 
where the latter inequality follows from the nonnegativity of $w$.
Therefore 
$g_T(\eta - \chi_u) + g_T(\zeta + \chi_u) \le g_T(\eta)+g_T(\zeta)$ 
in \eqref{EQmexc1}
holds.

If $\eta(u) = 1$ and $\zeta(u)=0$, 
then there exist branchings $B_\eta$ and $B_\zeta$ in $D[T]$ such that 
\begin{align*}
&{}R(B_\eta)=\suppp (\eta), \quad
w(B_\eta)=g_T (\eta), 
\\
&{}R(B_\zeta)=\suppp (\zeta), \quad
w(B_\zeta)=g_T (\zeta) . 
\end{align*}
It is understood that  
in digraph $(T, B_\eta \cup B_\zeta)$, 
an arc $a$ contained in both $B_\eta$ and $B_\zeta$
has multiplicity two in $B_\eta \cup B_\zeta$. 
We have $u \in R(B_\eta) \setminus R(B_\zeta)$. 
By 
Lemma~\ref{LEMexchange}
applied to $(T, B_\eta \cup B_\zeta)$,
there exist branchings $B_\eta'$ and $B_\zeta'$ which partition $B_\eta \cup B_\zeta$ and 
satisfy that 
\begin{align*}
\mbox{$R(B_\eta')= R(B_\eta) \setminus \{u\}$ 
\quad and \quad  
$R(B_\zeta')= R(B_\zeta) \cup \{u\}$} 
\end{align*}
or 
\begin{align*}
\mbox{$R(B_\eta')= (R(B_\eta) \setminus \{u\}) \cup \{v\}$ 
\quad and \quad   
$R(B_\zeta')= (R(B_\zeta) \cup \{u\})\setminus \{v\}$}
\end{align*}
for some $v \in R(B_\zeta) \setminus R(B_\eta)$. 
Then, 
in the former case we obtain 
\begin{align*}
g_T(\eta - \chi_u) + g_T(\zeta + \chi_u) 
\le w(B_\eta') + w(B_\zeta') 
=   w(B_\eta) + w(B_\zeta) 
= g_T(\eta) + g_T(\zeta),
\end{align*}
which shows \eqref{EQmexc1},
and in the latter case, 
\begin{align*}
g_T(\eta - \chi_u + \chi_v) + g_T(\zeta + \chi_u - \chi_v) 
{}&{}\le w(B_\eta') + w(B_\zeta') \\
{}&{}=   w(B_\eta) + w(B_\zeta) 
= g_T(\eta) + g_T(\zeta) ,
\end{align*}
which shows \eqref{EQmexc2}.
This proves M$\sp{\natural}$-convexity of $g_T$. 
\end{proof}

\section{M$\sp{\natural}$-convex Submodular Flow Formulation via Benders Decomposition}
\label{SECdiscussion}

In this section, 
we demonstrate that the M$\sp{\natural}$-convex submodular flow formulation~(MSF) 
can be obtained from the linear program~(P)
through the Benders decomposition,
where integrality is preserved in the decomposition process
and the resulting convex programming is endowed with discrete convexity.

We denote by
$x_{S,T}$, 
$x_{S}$, 
and 
$x_{T}$
the restrictions
$x|_{A[S,T]}$, 
$x|_{A[S]}$, and 
$x|_{A[T]}$ 
of $x$
to
$A[S,T]$, 
$A[S]$, and  
$A[T]$, 
respectively.
Similarly, we use abbreviations
$w_{S,T}=w|_{A[S,T]}$, $w_{S}=w|_{A[S]}$, and $w_{T}=w|_{A[T]}$.
Then the linear program~(P) is rewritten as 
\begin{alignat}{2}
\DetEq\quad
&{}\mbox{Minimize}\quad {}&{}&{}\sum_{a \in A[S,T]}w_{S,T}(a)x_{S,T}(a) 
+ \sum_{a \in A[S]}w_S(a)x_S(a) + \sum_{a \in A[S]}w_T(a)x_T(a) \notag\\
\label{EQst}
&{}\mbox{subject to}\quad{}&{}&{}\sum_{a \in A[S,T]}x_{S,T}(a) \ge 1, \\
&&&{}\sum_{a \in \delta^+ S' \cap A[S,T]}x_{S,T}(a) +  \sum_{a \in \delta^+ S' \cap A[S]}x_{S}(a) \ge 1 
\quad (\emptyset \not= S' \subsetneqq S), \\
&&&{}\sum_{a \in \delta^- T' \cap A[S,T]}x_{S,T}(a) +  \sum_{a \in \delta^- T' \cap A[T]}x_{T}(a) \ge 1 
\quad (\emptyset \not= T' \subsetneqq T), \\
\label{EQnonneg}
&&&{} x_{S,T}, x_S, x_T \ge 0.
\end{alignat}

The Benders decomposition proceeds in the following manner.
The \emph{master problem}, in variable $x_{S,T}$,  is described as
\begin{alignat}{2}
\Master\quad
&{}\mbox{Minimize}\quad {}&{}&{}
\sum_{a \in A[S,T]}w_{S,T}(a)x_{S,T}(a) + h_S(x_{S,T}) + h_T(x_{S,T}) 
\notag\\
\label{EQmaster1}
&{}\mbox{subject to}\quad{}&{}&{}
\sum_{a \in A[S,T]}x_{S,T}(a) \ge 1, \\
\label{EQmaster2}
&&&{} x_{S,T}\ge 0, 
\end{alignat}
where the functions $h_S$ and $h_T$
respectively 
represent the optimal values of the following subproblems
$\RecS$ and $\RecT$
parametrized by $x_{S,T}$:
\begin{alignat}{2}
\RecS\quad
&{}\mbox{Minimize}\quad {}&{}&{}
\sum_{a \in A[S]}w_S(a)x_S(a) \notag\\
&{}\mbox{subject to}
&&{}   \sum_{a \in \delta^+ S'\cap A[S]}x_{S}(a) 
\ge 1 - \sum_{a \in \delta^+ S'\cap A[S,T]} x_{S,T}(a) 
\quad (\emptyset \not= S' \subsetneqq S),\\
&&&{} x_S\ge 0;
\end{alignat}
\begin{alignat}{2}
\RecT\quad
&{}\mbox{Minimize}\quad {}&{}&{}
\sum_{a \in A[T]}w_T(a)x_T(a) \notag\\
\label{EQTcut}
&{}\mbox{subject to}
&&{}   \sum_{a \in \delta^- T' \cap A[T]}x_{T}(a) 
\ge 1 - \sum_{a \in \delta^- T'\cap A[S,T]} x_{S,T}(a) 
\quad (\emptyset \not= T' \subsetneqq T),\\
\label{EQTnonneg}
&&&{} x_T\ge 0.
\end{alignat}
The subproblems $\RecS$ and $\RecT$ 
are linear programs, whereas the master problem $\Master$ 
is a convex program.

We are concerned with a $\{0,1\}$-valued optimal solution 
$x \in \{0,1\}\sp{A}$.
Theorem~\ref{THdicut} guarantees
the existence of an integer optimal solution  for (LP),
and then the constraints~\eqref{EQst}--\eqref{EQnonneg}
imply that it is $\{0,1\}$-valued.
This implies that the master problem $\Master$
and the subproblems $\RecS$ and $\RecT$ 
are also equipped with discreteness.

The combinatorial (or matroidal) nature of
the subproblems can be seen as follows.
Fix $x_{S,T}=\xi \in \{0,1\}\sp{A[S,T]}$
satisfying \eqref{EQmaster1} and \eqref{EQmaster2}.
We first consider $\RecT$.
On noting that \eqref{EQTcut} can be rewritten as
\begin{align*}
 \sum_{a \in \delta^- T'\cap A[T]}x_{T}(a) 
\ge 1 + \partial \xi|_T
\quad (\emptyset \not= T' \subsetneqq T)
\end{align*}
and 
$x_T$ may be assumed to be a $\{0,1\}$-vector,
we can see that 
$\RecT$ is nothing other than the 
problem of finding the minimum-weight branching $B_T \subseteq A[T]$ 
in $D[T]$ with $R(B_T)=\suppp(-\partial \xi|_T)$. 
Thus, 
the optimal value of $\RecT$, denoted $h_T(\xi)$, is in fact equal to 
$g_T(-\partial \xi|_T)$ for the function $g_T$ defined in \eqref{EQgTdef},
i.e., $h_T(\xi) = g_T(-\partial \xi|_T)$. 
In addition, the function $g_T$ is  M${}\sp{\natural}$-convex
by Theorem \ref{THMconv}.
This shows the matroidal property of $\RecT$.
Similarly, we have
$h_S(\xi) = g_S(\partial \xi|_S)$
for the other subproblem $\RecS$,
where
$g_S$ is also an M${}\sp{\natural}$-convex function
by Theorem \ref{THMconv}.

With the above observations the master problem  $\Master$
can be rewritten as:  
\begin{alignat*}{2}
&{}\mbox{Minimize}\quad {}&{}&{}
\sum_{a \in A[S,T]}w_{S,T}(a) \xi(a) 
+ g_S(\partial \xi|_S) + g_T(-\partial \xi|_T) \\
&{}\mbox{subject to}
&&{} 
\xi \in \{0,1\}\sp{A[S,T]},
\end{alignat*}
where
the constraint~\eqref{EQmaster1} in $\Master$ is deleted 
since it is implied by
$\partial \xi|_S \in \dom g_S$ and 
$-\partial \xi|_T \in \dom g_T$. 
Thus,  
the master problem $\Master$ in the Benders decomposition
is equivalent to 
the M$\sp{\natural}$-convex submodular formulation~(MSF) in \eqref{EQmsf}. 

We remark that this observation 
implies that the linear program (P) can be solved by the Benders decomposition, 
in which the subproblems are the minimum-weight $r$-arborescence problem 
and hence can be solved efficiently. 

It is emphasized that the formulation in
the M$\sp{\natural}$-convex submodular problem~(MSF)
in Section \ref{SECmsbm}
is based on purely combinatorial arguments, without directly relying on 
the linear programming formulation (P) or $\DetEq$.
In contrast, 
in this section we have started with the linear programming formulation (P) 
and its integrality (Theorem~\ref{THdicut}),
and derived (MSF) therefrom.

\section{Optimal Flow and Potential from Optimal LP Solutions}
\label{SECpot}

According to the theory of M-convex submodular flows
in discrete convex analysis \cite{Mur99,Mur03},
the M$\sp{\natural}$-convex submodular flow formulation (MSF)
admits an optimality criterion 
in terms of potentials (dual variables).
The objective of this section is to show 
that an optimal flow
and an optimal potential
for (MSF) can be constructed from the optimal solutions 
of the primal-dual pair of linear programs (P) and (D).

The optimality criterion
for M$\sp{\natural}$-convex submodular flows \cite{Mur99,Mur03}, 
when tailored to (MSF), is given in Theorem \ref{THpotexist} below.
For vectors $p \in \ZZ\sp{S}$ and $q \in \ZZ\sp{T}$, 
define functions $g_S[+p]\colon \ZZ\sp{S} \to \overline{\ZZ}$ 
and 
$g_T[+q]\colon \ZZ\sp{T} \to \overline{\ZZ}$ by 
\begin{align*}
&{}g_S[+p](\eta) = g_S(\eta) + \sum_{u\in S}p(u)\eta(u) \quad (\eta \in \ZZ\sp{S}), \\ 
&{}g_T[+q]({\zeta}) = g_T({\zeta}) + \sum_{v\in T}q(v){\zeta}(v) \quad ({\zeta} \in \ZZ\sp{T}),
\end{align*}
where $g_S$ and $g_T$ are given in \eqref{EQgSdef} and \eqref{EQgTdef},
respectively.
\begin{theorem}
\label{THpotexist}
A feasible flow $\xi \in \{0,1\}\sp{A[S,T]}$ is an optimal solution for {\rm (MSF)} 
if and only if 
there exist 
$p \in \ZZ\sp{S}$ 
and 
$q \in \ZZ\sp{T}$
satisfying the following {\rm (i)}--{\rm (iii):} 
\begin{enumerate}
\item for $a =uv \in A[S,T]$,  
\begin{align}  
\xi(a) =1   
&\Longrightarrow \ 
w(a) +  p(u) -  q(v) \leq 0,
\label{EQoptpot1F}
\\
\xi(a) =0
&\Longrightarrow \ 
w(a) +  p(u) -  q(v) \geq 0.
\label{EQoptpot2F}
\end{align}
\item
$\partial\xi|_S \in \argmin( g_{S}[-p])$.
\item
$-\partial\xi|_T \in \argmin( g_{T}[+q])$.
\end{enumerate}
\end{theorem}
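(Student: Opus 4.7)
The plan is to treat Theorem~\ref{THpotexist} as a direct specialization of the general optimality criterion for the M$\sp{\natural}$-convex submodular flow problem (see \cite{Mur99,Mur03}), which states that a feasible flow $\xi$ is optimal if and only if there exists a potential on the vertices such that (a) every arc satisfies the usual reduced-cost sign condition with respect to its capacity bounds, and (b) at each endpoint the induced boundary vector minimizes the correspondingly shifted M$\sp{\natural}$-convex function. Since Theorem~\ref{THMconv} already furnishes the M$\sp{\natural}$-convexity of $g_S$ and $g_T$, the proof amounts to translating (MSF) into the standard form of an M$\sp{\natural}$-convex submodular flow problem and verifying that the conditions (i)--(iii) coincide with the specializations of (a) and (b).

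First I would set up the translation explicitly. The underlying network for (MSF) is the bipartite digraph $(S\cup T, A[S,T])$ with capacity $0 \le \xi(a) \le 1$ on each arc $a = uv \in A[S,T]$, cost vector $w_{S,T}$, and two endpoint M$\sp{\natural}$-convex functions: $g_S$ on $\ZZ\sp{S}$ acting on the outflow $\partial\xi|_S$ at the $S$-side, and $g_T$ on $\ZZ\sp{T}$ acting on the inflow $-\partial\xi|_T$ at the $T$-side. A potential in the general theorem is a vector on $S\cup T$, which in our setting splits into $p \in \ZZ\sp{S}$ and $q \in \ZZ\sp{T}$; for arc $a=uv$ with $u \in S$, $v \in T$, the reduced cost takes the form $w(a) + p(u) - q(v)$.

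Next I would match the optimality conditions. The generic reduced-cost condition says that an arc with $\xi(a)$ at its lower bound must have nonnegative reduced cost while an arc at its upper bound must have nonpositive reduced cost; applied to $\xi(a) \in \{0,1\}$ this is precisely \eqref{EQoptpot1F}--\eqref{EQoptpot2F} in (i). The generic boundary condition asserts that $\partial\xi|_S$ is a minimizer of $g_S$ after it is shifted by the restriction of the potential to $S$, and that $-\partial\xi|_T$ is a minimizer of $g_T$ after the analogous shift at $T$. With the sign convention fixed by how $\partial\xi$ enters the reduced cost (tail minus head), this translates directly to (ii) and (iii). The integrality of the witnessing potentials $(p,q)$ is inherited from the fact that M$\sp{\natural}$-convex functions admit integer subgradients at integral points, which is the standard integrality statement accompanying the general theorem.

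The only genuinely delicate point, and hence the main obstacle I would focus on, is fixing the signs consistently across the two endpoint pieces: the $S$-side is associated with the outflow and enters $g_S$ with the shift $-p$ (hence $\argmin g_S[-p]$), whereas the $T$-side is associated with the \emph{negated} divergence $-\partial\xi|_T$ and enters $g_T$ with the shift $+q$ (hence $\argmin g_T[+q]$). I would verify this by writing the Lagrangian of (MSF) with multipliers $p$ at $S$ and $q$ at $T$, collecting the arc-term $\sum_{a=uv}(w(a)+p(u)-q(v))\xi(a)$ together with the two endpoint terms $g_S(\partial\xi|_S) - \sum_u p(u)\partial\xi|_S(u)$ and $g_T(-\partial\xi|_T) + \sum_v q(v)(-\partial\xi|_T(v))$, and observing that minimizing each piece separately yields exactly (i), (ii), and (iii). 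Sufficiency then follows because the sum of the three minima is a lower bound on the objective coinciding with its value at $\xi$, while necessity is the content of the general M$\sp{\natural}$-convex submodular flow duality theorem.
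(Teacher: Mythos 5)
Your proposal is correct and follows essentially the same route as the paper: the paper gives no independent proof of Theorem~\ref{THpotexist}, but simply states it as the optimality criterion for M$\sp{\natural}$-convex submodular flows from \cite{Mur99,Mur03} tailored to (MSF), relying on Theorem~\ref{THMconv} for the M$\sp{\natural}$-convexity of $g_S$ and $g_T$. Your sign-convention check and the Lagrangian-style decomposition giving the sufficiency direction are consistent with that specialization and add only routine verification beyond what the paper records.
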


We refer to $(p,q) \in \ZZ\sp{S \cup T}$ satisfying (i)--(iii) in Theorem~\ref{THpotexist} 
for some $\xi \in \{0,1\}\sp{A[S,T]}$
as an 
\emph{optimal potential} for (MSF).

We will show how to 
construct an optimal flow $\xi\sp{*} \in \{0,1\}\sp{A[S,T]}$ and 
an optimal potential $(p\sp{*}, q\sp{*}) \in \ZZ\sp{S\cup T}$ for (MSF) 
from the optimal solutions 
 $x \in \{0,1\}\sp{A}$ and 
$(y,z) \in \ZZ\sp{2\sp{S} \setminus \{\emptyset\}} \times \ZZ\sp{2\sp{T} \setminus \{\emptyset\}}$  
of the linear programs (P) and (D).
Recall from Theorem~\ref{THdicut} that both
(P) and (D) have integer optimal solutions.

Given $x$ and $(y,z)$, define
$\xi\sp{*}$ and $(p\sp{*}, q\sp{*})$ by 
\begin{alignat}{2}
	\label{EQxi}
	&{}\xi\sp{*}(a)=x(a)& \quad {}&{}(a \in A[S,T]), \\ 
	\label{EQp}
	&{}p\sp{*}(u) = -\sum_{S' \subseteq S, \ u \in S'}y(S')& \quad {}&{}(u \in S), \\
	\label{EQq}
	&{}q\sp{*}(v) = \sum_{T' \subseteq T, \ v \in T'}z(T')& \quad {}&{}(v \in T). 
\end{alignat}
We prove that $\xi\sp{*}$ and $(p\sp{*},q\sp{*})$ are an optimal flow and an optimal potential for (MSF), 
respectively. 

\begin{theorem}
\label{THpot}
Let
$x \in \{0,1\}\sp{A}$ and 
$(y,z) \in \ZZ\sp{2\sp{S}} \times \ZZ\sp{2\sp{T}}$  
be optimal solutions for {\rm (P)} and {\rm (D)}, 
respectively. 
Then, 
$\xi\sp{*}$ and $(p\sp{*},q\sp{*})$ defined in \eqref{EQxi}--\eqref{EQq} are 
an optimal flow and an optimal potential for {\rm (MSF)}, 
respectively. 
\end{theorem}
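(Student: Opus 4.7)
My plan is to verify the three conditions (i)--(iii) of Theorem~\ref{THpotexist} for $\xi^*,p^*,q^*$, with (i) following from LP complementary slackness for (P) and (D), and (ii), (iii) (which are symmetric) being derived from LP duality applied to the Benders subproblems $\RecS$, $\RecT$.

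For (i) I would argue as follows. Let $a = uv \in A[S,T]$. Since $u \in S$ and $v \in T$, the family $\{S' \subseteq S : a \in \delta^+ S'\}$ equals $\{S' \subseteq S : u \in S'\}$, so $\sum_{S' : a \in \delta^+ S'} y(S') = -p^*(u)$, and analogously $\sum_{T' : a \in \delta^- T'} z(T') = q^*(v)$. If $\xi^*(a)=1$, then \eqref{EQx} gives $-p^*(u)+q^*(v) = w(a)$, which is \eqref{EQoptpot1F} with equality; if $\xi^*(a)=0$, dual feasibility \eqref{dual1} gives $-p^*(u)+q^*(v) \le w(a)$, which is \eqref{EQoptpot2F}.

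For (ii) I would first establish feasibility: because $w \ge 0$, we may assume $x$ is the indicator of an inclusion-wise minimal optimal $S$-$T$ bibranching, so that $x|_{A[S]}$ is a cobranching in $D[S]$ with $R^*(x|_{A[S]}) = \suppp(\partial \xi^*|_S)$ and $w(x|_{A[S]}) = g_S(\partial \xi^*|_S)$ (otherwise $x$ could be improved by replacing $x|_{A[S]}$ with a cheaper cobranching). I would then show $g_S[-p^*](\partial \xi^*|_S) = \sum_{\emptyset \ne S' \subseteq S} y(S')$: the restriction $\alpha := y|_{\emptyset \ne S' \subsetneqq S}$ is dual-feasible for $\RecS$ at $\xi^*$ (by \eqref{dual1} applied to $a \in A[S]$, where the $z$-terms vanish), $x|_{A[S]}$ is primal-feasible, and \eqref{EQx}--\eqref{EQy} for (P), (D) imply the complementary slackness for $\RecS$; hence by LP strong duality $g_S(\partial \xi^*|_S) = \sum_{\emptyset \ne S' \subsetneqq S} y(S')\bigl(1 - (\partial \xi^*|_S)(S')\bigr)$, where $(\partial \xi^*|_S)(S') := \sum_{u \in S'} (\partial \xi^*|_S)(u)$. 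Substituting this into the definition of $g_S[-p^*]$ and using \eqref{EQy} at $S'=S$ (which yields $y(S)\cdot(\partial \xi^*|_S)(S) = y(S)$ in both cases $y(S)=0$ and $y(S)>0$) gives the claimed equality. Finally, I would establish the matching lower bound $g_S[-p^*](\eta) \ge \sum_{\emptyset \ne S' \subseteq S} y(S')$ for every $\eta \in \dom g_S$: writing $U := \suppp(\eta)$, the LP characterization of $g_S(U)$ has its nontrivial cut-covering constraints indexed by $\emptyset \ne S' \subseteq S \setminus U$, and the restriction of $y$ to these indices is dual-feasible, giving $g_S(U) \ge \sum_{\emptyset \ne S' \subseteq S \setminus U} y(S')$; meanwhile $\sum_u (-p^*(u))\eta(u) = \sum_{S'} y(S') \eta(S') \ge \sum_{\emptyset \ne S' \subseteq S,\, S' \cap U \ne \emptyset} y(S')$ since $\eta(S') \ge 1$ whenever $S' \cap U \ne \emptyset$. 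The two partial sums correspond to the partition of $\{\emptyset \ne S' \subseteq S\}$ into $\{S' \subseteq S \setminus U\}$ and $\{S' : S' \cap U \ne \emptyset\}$, proving the lower bound. Condition (iii) follows by a completely symmetric argument on $T$.

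The main obstacle is the accounting for the ``master'' dual variable $y(S)$, which is not among the dual variables of $\RecS$ and so must be recovered separately via complementary slackness at $S'=S$; together with the clean partitioning of the sum over subsets of $S$ required to align the LP-duality lower bound for $g_S(U)$ with the linear perturbation $-p^*$ so that the resulting bound is tight at $\partial \xi^*|_S$.
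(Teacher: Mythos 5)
Your proposal is correct, and it handles condition (i) exactly as the paper does (dual feasibility \eqref{dual1} plus complementary slackness \eqref{EQx} after collapsing the sums over $S'\ni u$ and $T'\ni v$). For (ii)/(iii), however, you take a genuinely different route. The paper proves (iii) directly: starting from \eqref{EQ1} it bounds $g_T[+q\sp{*}](\eta)$ from below by the constant $q\sp{*}(T)-\sum_{T'}(|T'|-1)z(T')$ via the chain \eqref{INEQ1}--\eqref{INEQ3} (using only \eqref{dual1} and $q\sp{*}\ge 0$), and then shows all three inequalities are tight at $\eta=-\partial\xi\sp{*}|_{T}$ using \eqref{EQx} and \eqref{EQz} applied to $B\sp{*}=\suppp(x)$. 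You instead invoke strong LP duality for the Benders subproblems $\RecS$, $\RecT$ of Section~\ref{SECdiscussion}: the restriction of $y$ to proper subsets is dual-feasible for $\RecS$, complementary slackness transfers from (P)/(D), and the master variable $y(S)$ is recovered separately through \eqref{EQy} at $S'=S$, yielding $\min g_S[-p\sp{*}]=\sum_{\emptyset\neq S'\subseteq S}y(S')$ attained at $\partial\xi\sp{*}|_S$; your lower bound for general $\eta$ (splitting subsets into those inside and those meeting $\suppp(\eta)$) plays the role of the paper's \eqref{INEQ0}/\eqref{INEQ3}. Both arguments rest on the same complementary-slackness facts; yours makes the connection to the Benders picture explicit and additionally identifies the optimal value of $g_S[-p\sp{*}]$ (resp.\ $g_T[+q\sp{*}]$) with the corresponding part of the dual objective, while the paper's is more elementary and avoids appealing to duality for the subproblems. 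One point to tighten: your ``we may assume $x|_{A[S]}$ is a minimum-weight cobranching with $R\sp{*}=\suppp(\partial\xi\sp{*}|_S)$'' must be done by replacing only $x|_{A[S]}$ (and symmetrically $x|_{A[T]}$) while keeping $x|_{A[S,T]}$, hence $\xi\sp{*}$, fixed, and it needs the standard extraction argument that from the covering set $x|_{A[S]}$ one can extract such a cobranching of no larger weight (so the replaced solution is still feasible and optimal, and complementary slackness with the same $(y,z)$ persists); this is the same kind of step the paper glosses when it asserts that $B\sp{*}[T]$ is a branching with $R(B\sp{*}[T])=\suppp(-\partial\xi\sp{*}|_T)$, and it could alternatively be sidestepped by citing the identity $h_S(\xi)=g_S(\partial\xi|_S)$ from Section~\ref{SECdiscussion}.
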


\begin{proof}
In the following we 
show (i)--(iii) in Theorem~\ref{THpotexist}.
We first show (i). 
For $a=uv\in A[S,T]$, 
it holds that 
\begin{align*}
	- p\sp{*}(u)+q\sp{*}(v) 
	{}&{}=
	\sum_{S'\subseteq S, \ u \in S'}y(S')
       +\sum_{T'\subseteq T, \ v \in T'}z(T') \\
	{}&{}=  \sum_{S'\subseteq S, \ a \in \delta\sp{+}S'}y(S') 
            + \sum_{T'\subseteq T, \  a \in \delta\sp{-}T'}z(T') \\
	{}&{}\le w(a), 
\end{align*} 
where the last inequality is due to \eqref{dual1}. 
Moreover, if $\xi(a)=1$, 
the inequality turns into an equality by \eqref{EQx}, 
and therefore \eqref{EQoptpot1F} and \eqref{EQoptpot2F} follow. 

Next we show (iii) (rather than (ii)).
Let 
$w'(a) = w(a)-q\sp{*}(v)$ for  
$a = uv \in A[T]$.  
For an arbitrary $\eta \in \dom g_T$, 
it holds that 
	\begin{align}
	g_T[+q\sp{*}](\eta) 
	{}&{}= \min \{w(B) \colon \mbox{$B$ is a branching in $D[T]$, 
               $R(B)=\suppp(\eta)$}\} + \sum_{v \in T} q\sp{*}(v) \eta(v) \notag\\
	{}&{}= \min \{w'(B)  \colon \mbox{$B$ is a branching in $D[T]$, 
               $R(B)=\suppp(\eta)$}\}  \notag\\
	{}&{}\quad + q\sp{*}(T) + \sum_{v \in \suppp(\eta)}q\sp{*}(v)(\eta(v)-1). 
	\label{EQ1}
	\end{align}
A lower bound for the right-hand side of \eqref{EQ1} is provided as follows. 
For the first term we have 
\begin{align}
& \min \{w'(B)  \colon \mbox{$B$ is a branching in $D[T]$, $R(B)=\suppp(\eta)$}\}  
\notag \\
 & \ge \ 
 - \sum_{\emptyset \not= T'\subseteq T}(|T'|-1)z(T'), 
\label{INEQ0}
\end{align}
since, for any branching $B$ in $D[T]$ with  $R(B)=\suppp(\eta)$, it holds that
\begin{align}
w'(B) 	= {}&{}
	\sum_{uv \in B}(w(uv) - q\sp{*}(v))  
\notag\\
	= {}&{} 
      \sum_{uv \in B}\left(w(uv) - \sum_{T'\subseteq T, \  v \in T'}z(T')\right)
\notag\\
	\ge {}&{}
       \sum_{uv \in B}\left(\sum_{T'\subseteq T,\ uv \in \delta\sp{-}T'}z(T') - 
	                         \sum_{T'\subseteq T, \  v \in T'}z(T')\right)  
\label{INEQ1}\\
	= {}&{}
      \sum_{uv \in B}\left(- \sum_{T'\subseteq T,   \   u,v \in T'}z(T')\right)
\notag\\
	= {}&{}
      - \sum_{\emptyset \not= T'\subseteq T}|B[T']|\cdot z(T') 
\notag\\
	\ge {}&{} - \sum_{\emptyset \not= T'\subseteq T}(|T'|-1)z(T'), \label{INEQ2}
	\end{align}
where the first inequality is by \eqref{dual1}.
In addition, 
the last term of the right-hand side of \eqref{EQ1} is nonnegative, 
i.e.,\ 
	\begin{align}
	\sum_{v \in \suppp(\eta)}q\sp{*}(v)(\eta(v)-1) \ge 0, 
	\label{INEQ3}
	\end{align}
since $q\sp{*}(v) \ge 0$ by \eqref{EQq}.
 {} From \eqref{EQ1}, \eqref{INEQ0}, and \eqref{INEQ3}, we  obtain 
\begin{align*}
g_T[+q\sp{*}](\eta) \ge - \sum_{\emptyset \not= T'\subseteq T}(|T'|-1)z(T') + q\sp{*}(T), 
\end{align*}
where the right-hand side is a constant for a fixed $z$. 
Hence, 
in order to prove $-\partial\xi\sp{*}|_{T} \in \argmin g_T[+q\sp{*}]$, 
it suffices to show that 
the three inequalities \eqref{INEQ1}, \eqref{INEQ2}, and \eqref{INEQ3} 
in the above 
turn into equalities 
when $\eta = -\partial\xi\sp{*}|_{T}$. 

For the first and second inequalities~\eqref{INEQ1} and \eqref{INEQ2}, 
let $B\sp{*} = \suppp(x)$ be the shortest $S$-$T$ bibranching corresponding to $x$. 
Then 
$B\sp{*}[T]$ is a branching in $D[T]$ such that $R(B\sp{*}[T])=\suppp(-\partial\xi\sp{*}|_T)$, 
and 
the first inequality~\eqref{INEQ1} holds with equality for $B\sp{*}[T]$ by \eqref{EQx}.
Moreover, 
$|B\sp{*} \cap \delta\sp{-}T'| = 1$ 
for every nonempty $T' \subseteq T$ with $z(T') > 0$ by \eqref{EQz}. 
Thus, 
$|B\sp{*}[T']| = |T'| - |B\sp{*} \cap \delta\sp{-}T'|= |T'|-1$ if $z(T') > 0$, 
and hence the equality in \eqref{INEQ2} follows. 
For the third inequality~\eqref{INEQ3}, 
suppose $q\sp{*}(v)>0$ and 
let $T' \subseteq T$ contribute to $q\sp{*}(v)$ in \eqref{EQq}, 
i.e., 
$v \in T'$ and $z(T') >0$. 
Since $v \in \suppp(-\partial\xi\sp{*}|_{T})$, 
there exists at least one arc 
$a\sp{*} = uv \in A[S,T]$
such that $x(a\sp{*})=1$. 
Then we have that $a\sp{*} \in \delta\sp{-}T'$. 
We also have 
$\sum_{a\in \delta\sp{-}T'}x(a)=1$
by \eqref{EQz}, 
and hence such $a\sp{*}$ is unique. 
Therefore $-\partial\xi\sp{*}|_{T}(v)=1$ follows.
Hence all terms in the summation in \eqref{INEQ3} are equal to zero.

Finally, condition~(ii) is proved similarly to (iii). 
\end{proof}

\section{Optimal LP Solutions from an Optimal Flow and Potential}
\label{SECcut}

In this section, 
we describe how to construct optimal solutions for (P) and (D) 
of the linear programming formulation
from an optimal flow $\xi \in \{0,1\}\sp{A[S,T]}$  and 
an optimal potential $(p,q) \in \ZZ\sp{S \cup T}$ 
for the M$\sp{\natural}$-convex submodular flow formulation (MSF).

We first establish the following lemma, 
in which
$(p,q)$ need not be an optimal potential but an arbitrary pair of vectors. 
\begin{lemma}
\label{lem:deg2}
For arbitrary $p\in \ZZ\sp{S}$ and $q \in \ZZ\sp{T}$, 
the following hold. 
\begin{itemize}
\item 
If $\argmin g_S[-p] \neq \emptyset$, 
then $p(u)\le 0$ for every $u \in S$. 
Moreover, 
$p(u) = 0$ if 
$\eta\sp{*}(u) \geq 2$ 
for some $\eta\sp{*} \in \argmin g_S[-p]$. 
\item 
If $\argmin g_T[+q] \neq \emptyset$, 
then $q(v)\ge 0$ for every $v \in T$. 
Moreover, 
$q(v) = 0$ if 
$\eta\sp{*}(v) \geq 2$ 
for some $\eta\sp{*} \in \argmin g_T[+q]$. 
\end{itemize}
\end{lemma}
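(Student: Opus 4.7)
The plan is to prove the claim by a simple perturbation argument: test an element of the argmin against $\pm\chi_v$ and exploit two elementary features of $g_T$ (and symmetrically $g_S$), namely that $g_T(\eta)$ depends only on $\suppp(\eta)$ and that enlarging the support can only decrease $g_T$. No appeal to M$\sp{\natural}$-convexity itself is needed.

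First I would record three structural observations drawn directly from the definition~\eqref{EQgTdef}. (a) If $\eta,\eta'\in\dom g_T$ have the same positive support, then $g_T(\eta)=g_T(\eta')$, since the minimization depends only on $\suppp(\eta)$. (b) If $\eta,\eta'\in\dom g_T$ satisfy $\suppp(\eta)\subseteq\suppp(\eta')$, then $g_T(\eta)\ge g_T(\eta')$: taking an optimal branching $B_T$ for $\eta$ and deleting the unique arcs entering each vertex of $\suppp(\eta')\setminus\suppp(\eta)$ produces a branching with root set $\suppp(\eta')$ of weight at most $w(B_T)$, where nonnegativity of $w$ is essential. (c) For every $\eta\in\dom g_T$ and every $v\in T$, we have $\eta+\chi_v\in\dom g_T$, because the characterization of $\dom g_T$ in terms of source components of $D[T]$ is preserved under enlarging the support.

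Now fix $\eta\sp{*}\in\argmin g_T[+q]$ and $v\in T$. To show $q(v)\ge 0$, I test the perturbation $\eta\sp{*}+\chi_v$, which lies in $\dom g_T$ by (c). If $\eta\sp{*}(v)\ge 1$, the support is unchanged, so by (a) $g_T[+q](\eta\sp{*}+\chi_v)-g_T[+q](\eta\sp{*})=q(v)$, and optimality forces $q(v)\ge 0$. If $\eta\sp{*}(v)=0$, then by (b) $g_T(\eta\sp{*}+\chi_v)\le g_T(\eta\sp{*})$, so the objective change is at most $q(v)$, and again $q(v)\ge 0$. For the moreover part, suppose $\eta\sp{*}(v)\ge 2$; then $\eta\sp{*}-\chi_v\in\dom g_T$ with the same support as $\eta\sp{*}$, so by (a) $g_T[+q](\eta\sp{*}-\chi_v)-g_T[+q](\eta\sp{*})=-q(v)$, whence $q(v)\le 0$, and combining with the first assertion gives $q(v)=0$.

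The statement for $g_S$ follows by the same template applied to $\eta\sp{*}\in\argmin g_S[-p]$, reversing signs to account for the linear term $-\sum_u p(u)\eta(u)$ and using the obvious cobranching analogues of (a)--(c) (arcs reversed, $R\sp{*}$ replacing $R$, and source components of $D[S]$ replaced by sink components). The only point that requires a moment of care is the monotonicity in (b); everything else is a one-line local-optimality check, so I expect no real obstacle beyond verifying that arc-deletion in a branching and the nonnegativity of $w$ yield the desired inequality on $g_T$.
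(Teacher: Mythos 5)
Your proof is correct, and the ``moreover'' half is verbatim the paper's argument: for $\eta\sp{*}(v)\ge 2$ compare $\eta\sp{*}$ with $\eta\sp{*}-\chi_v$, use that $g_T$ depends only on the positive support, and conclude $q(v)\le 0$. Where you genuinely diverge is the nonnegativity half. The paper does not perturb a minimizer at all: it argues by contradiction that if $q(v)<0$ then $g_T[+q](\chi_T+\alpha\chi_v)=g_T(\chi_T)+q(T)+\alpha q(v)\to-\infty$, so the argmin would be empty; this needs only $\chi_T\in\dom g_T$ (the empty branching) and no property of $w$. You instead run a local optimality test at an arbitrary $\eta\sp{*}\in\argmin g_T[+q]$ against $\eta\sp{*}+\chi_v$, which in the case $\eta\sp{*}(v)=0$ requires your monotonicity observation (b) that enlarging the support cannot increase $g_T$ -- an argument that hinges on the nonnegativity $w\in\ZZ_+\sp{A}$ (delete the unique $B_T$-arcs entering the new roots). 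Since $w\ge 0$ is a standing assumption of the paper, this is perfectly sound, and your observations (a)--(c) are easily verified from \eqref{EQgTdef} and the source-component characterization of $\dom g_T$ (indeed (b) is the same deletion trick the paper uses inside the proof of Theorem~\ref{THMconv}). The trade-off: your route is purely local and reuses structure already established, while the paper's ray argument is shorter, independent of the sign of $w$, and makes explicit that $q(v)\ge 0$ is forced merely by boundedness of $g_T[+q]$ from below, not by any particular minimizer. The symmetric $S$-side bookkeeping with $g_S[-p]$ is handled correctly in both.
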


\begin{proof}
It suffices to prove the latter assertion. 
Suppose that $q(v) < 0$ for some $v \in T$. 
Note that $\chi_T \in \dom g_T$. 
Then, 
for an arbitrary positive integer $\alpha$, 
we have that 
\begin{align*}
g_T[+q](\chi_T + \alpha \chi_v) 
= g_T(\chi_T) + q(T) + \alpha q(v), 
\end{align*}
which tends to $- \infty$ as $\alpha \to +\infty$. 
Therefore, 
$\argmin g_T[+q] \neq \emptyset$ implies $q(v) \ge 0$ for every $v \in T$. 

Now suppose that 
$\eta\sp{*} \in \argmin g_T[+q]$ and 
$\eta\sp{*}(v) \ge 2$. 
Then 
$\suppp(\eta\sp{*}) = \suppp(\eta\sp{*} - \chi_v)$, 
and hence 
$g_T(\eta\sp{*}) = g_T(\eta\sp{*} - \chi_v)$,
whereas 
$g_T[+q](\eta\sp{*}) \le g_T[+q](\eta\sp{*} - \chi_v)$
by
$\eta\sp{*} \in \argmin g_T[+q]$.
Therefore, we have 
\begin{align*}
g_T[+q](\eta\sp{*}) 
{}&{}\le g_T[+q](\eta\sp{*} - \chi_v) \\
{}&{} = g_T(\eta\sp{*} - \chi_v) + q\cdot(\eta\sp{*} - \chi_v) \\
{}&{} = g_T[+q](\eta\sp{*}) - q(v), 
\end{align*}
which implies $q(v) \le 0$. 
Therefore, 
$q(v) = 0$ follows. 
\end{proof}

We next show the existence of an optimal potential satisfying a property stronger than \eqref{EQoptpot1F}. 

\begin{lemma}
\label{lem:tight}
For an optimal flow $\xi \in \{0,1\}\sp{A[S,T]}$, 
there exists an optimal potential $(p,q) \in \ZZ\sp{S \cup T}$ such that 
\begin{align}
\xi(a) =1   
&\Longrightarrow \ 
w(a) +  p(u) -  q(v) = 0 
\label{EQtight}
\end{align}
holds 
for every $a=uv \in A[S,T]$. 
\end{lemma}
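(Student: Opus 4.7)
The plan is to use Theorem~\ref{THpot} in reverse. Concretely, I would extend the given flow $\xi$ to a $\{0,1\}$-valued integer optimal solution $x^{*}$ of the linear program~(P) with $x^{*}|_{A[S,T]}=\xi$, pair it with any integer optimal solution $(y^{*},z^{*})$ of the dual~(D), and define $(p^{*},q^{*})$ from $(y^{*},z^{*})$ via \eqref{EQp}--\eqref{EQq}. Theorem~\ref{THpot} will then certify that $(p^{*},q^{*})$ is an optimal potential for $\xi$, and \eqref{EQtight} will follow from the complementary slackness condition~\eqref{EQx}.

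The first step, building $x^{*}$, exploits the feasibility of $\xi$ for (MSF): since $\partial\xi|_{S}\in\dom g_{S}$ and $-\partial\xi|_{T}\in\dom g_{T}$, we can pick a cobranching $B_{S}\subseteq A[S]$ attaining $g_{S}(\partial\xi|_{S})$ and a branching $B_{T}\subseteq A[T]$ attaining $g_{T}(-\partial\xi|_{T})$. Letting $F=\{a\in A[S,T]:\xi(a)=1\}$, the union $B^{*}=F\cup B_{S}\cup B_{T}$ is an $S$-$T$ bibranching of weight $w(\xi)+g_{S}(\partial\xi|_{S})+g_{T}(-\partial\xi|_{T})$, which equals the optimal value of (MSF) by optimality of $\xi$. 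By the Benders decomposition of Section~\ref{SECdiscussion} this value coincides with the optimum of~(P), so the characteristic vector $x^{*}$ of $B^{*}$ is an integer optimal solution of~(P) with $x^{*}|_{A[S,T]}=\xi$.

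Given $x^{*}$, Theorem~\ref{THdicut} supplies an integer optimal dual $(y^{*},z^{*})$, and the vector $(p^{*},q^{*})$ defined by~\eqref{EQp}--\eqref{EQq} is, by Theorem~\ref{THpot}, an optimal potential for the flow $\xi^{*}=x^{*}|_{A[S,T]}=\xi$. Finally, for every $a=uv\in A[S,T]$ with $\xi(a)=1$ we have $x^{*}(a)=1>0$, so \eqref{EQx} yields $\sum_{S'\ni u}y^{*}(S')+\sum_{T'\ni v}z^{*}(T')=w(a)$; rewriting through~\eqref{EQp}--\eqref{EQq} gives $w(a)+p^{*}(u)-q^{*}(v)=0$, which is precisely~\eqref{EQtight}.

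The main technical point is the identification, used in the first step, of the optimal value of (MSF) with that of~(P). This rests on the Benders-decomposition argument of Section~\ref{SECdiscussion} (and on the integrality Theorem~\ref{THdicut}), but it is essentially already in place there. Once this identification is available, no further combinatorial manipulation of $(p,q)$ is required: the construction is a direct one-shot transfer from LP duality, dual to the one carried out in Section~\ref{SECpot}.
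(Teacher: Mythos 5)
Your proposal is correct, but it takes a genuinely different route from the paper. The paper proves Lemma~\ref{lem:tight} entirely on the discrete-convex side: it starts from an \emph{arbitrary} optimal potential $(p^{\circ},q^{\circ})$ and repairs it arc by arc, using Lemma~\ref{lem:deg2} to see that $p^{\circ}(u^{*})\le 0\le q^{\circ}(v^{*})$, resetting these two values so that the violated arc becomes tight, and then proving (the Claim inside the proof) that the modified pair still satisfies (i)--(iii) of Theorem~\ref{THpotexist} for the same $\xi$; iterating removes all violations. You instead discard the given potential, extend $\xi$ to an integral optimal solution $x^{*}$ of (P) by appending a branching and a cobranching attaining $g_T(-\partial\xi|_T)$ and $g_S(\partial\xi|_S)$ (valid, since the resulting set is a bibranching of weight equal to the (MSF) optimum, which coincides with the (P) optimum by Section~\ref{SECdiscussion}, Theorem~\ref{THdicut} and the nonnegativity of $w$), take an integral optimal dual from Theorem~\ref{THdicut}, and invoke Theorem~\ref{THpot}; tightness for arcs with $\xi(a)=1$ then follows from \eqref{EQx}, exactly as in the equality observed inside the paper's proof of Theorem~\ref{THpot}. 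This one-shot transfer is shorter and logically sound (no circularity, since Theorem~\ref{THdicut} is an external input throughout), but it buys less: it requires producing an integral optimal LP dual, which is precisely the kind of object Section~\ref{SECcut} is trying to \emph{construct} from the (MSF) data, so within the paper's narrative it makes the conversion partly redundant; the paper's local-modification proof is stronger in that it converts \emph{any} given optimal potential into a tight one by a purely combinatorial adjustment, keeping the DCA-to-LP direction self-contained. If you keep your route, do state explicitly the two facts you rely on: that the (MSF) and (P) optima agree, and that Theorem~\ref{THpot} certifies (i)--(iii) for the flow $x^{*}|_{A[S,T]}=\xi$, so the resulting potential is optimal with respect to the originally given $\xi$.
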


\begin{proof}
Let 
$(p\sp{\circ}, q\sp{\circ})$ be a given optimal potential 
and assume that 
\eqref{EQtight} fails 
for $a\sp{*}=u\sp{*}v\sp{*} \in A[S,T]$.
This means, by \eqref{EQoptpot1F}, that 
$\xi(a\sp{*})=1$ and 
$w(a\sp{*}) + p\sp{\circ}(u\sp{*}) - q\sp{\circ}(v\sp{*}) <0$.

By Lemma~\ref{lem:deg2},  
it holds that 
$p\sp{\circ}(u\sp{*}) \le 0$ 
and $q\sp{\circ}(v\sp{*}) \ge 0$. 
Then, there exist $\alpha,\beta \in \ZZ$ such that 
\begin{align*}
p\sp{\circ}(u\sp{*}) \le \alpha \le 0, 
&&
0 \le \beta \le q\sp{\circ}(v\sp{*}), 
&&
w(a\sp{*}) + \alpha - \beta = 0.
\end{align*}
With such  $\alpha, \beta$
we modify 
$(p\sp{\circ}, q\sp{\circ})$ 
to $(p', q') \in \ZZ\sp{S} \times \ZZ\sp{T}$ as
\begin{align*}
p'(u) =
\begin{cases}
p\sp{\circ}(u)    & (u \in S \setminus \{u\sp{*}\}), \\
\alpha  & (u=u\sp{*}), 
\end{cases}
&&
q'(v) = 
\begin{cases}
q\sp{\circ}(v)   & (v \in T \setminus \{v\sp{*}\}), \\
\beta  & (v=v\sp{*}).
\end{cases}
\end{align*}
Note that \eqref{EQtight} holds 
for $a\sp{*}=u\sp{*}v\sp{*}$ with respect to 
the modified potential $(p', q')$.

\begin{claim}
\label{clm:newpot}
$(p',q')$ is an optimal potential. 
\end{claim}

\begin{proof}[Proof for Claim]
We prove that $\xi$ and 
$(p',q')$ satisfy (i)--(iii) in Theorem~\ref{THpotexist}. 
Note that 
(i)--(iii) in Theorem~\ref{THpotexist} hold for 
$\xi $ and $(p\sp{\circ},q\sp{\circ})$. 

We first show (i). 
Inequality~\eqref{EQoptpot2F} follows from 
$p' \ge p\sp{\circ}$ and 
$q' \le q\sp{\circ}$. 
As for \eqref{EQoptpot1F}, 
it is obvious that \eqref{EQoptpot1F}  holds for $a\sp{*}$. 
Let $\hat{a} = \hat{u}\hat{v} \in A[S,T] \setminus \{a\sp{*}\}$ be such that  
$\xi(\hat{a}) =1$. 
If $\hat{a}$ is not adjacent to $a\sp{*}$, 
then 
$w(\hat{a}) + p'(\hat{u}) - q'(\hat{v}) = w(\hat{a}) + p\sp{\circ}(\hat{u})-q\sp{\circ}(\hat{v}) \le 0$. 
Suppose that $\hat{a}$ is adjacent to $a\sp{*}$,
i.e., $\hat{v} = v\sp{*}$ or $\hat{u} = u\sp{*}$.
If $\hat{v} = v\sp{*}$, then
$-\partial\xi(\hat{v}) \ge 2$, 
and  $q\sp{\circ}(\hat{v})=0$ follows from Lemma~\ref{lem:deg2}. 
Therefore, 
$q'(\hat{v}) = q\sp{\circ}(\hat{v}) = 0$, 
and hence 
\eqref{EQoptpot1F} holds for $\hat{a}$. 
The other case of $\hat{u} = u\sp{*}$ can be treated similarly.

We next show (iii), while noting that (ii) can be proved similarly as (iii). 
Suppose, to the contrary, that 
$-\partial\xi|_{T} \not\in \argmin(g_T[+q'])$. 
That is, 
$g_T[+q'](\eta) < g_T[+q'](-\partial\xi|_{T})$ holds for some $\eta \in \ZZ_+\sp{T}$. 
Here, 
we claim the following:
\begin{align}
-\partial\xi(v\sp{*}) &= 1, \label{EQone}\\
\eta(v\sp{*}) & \ge 2.          \label{EQmore}
\end{align}
\begin{description}
\item[Proof for \eqref{EQone}.] 
Since 
$-\partial\xi|_{T} \in \argmin(g_T[+q\sp{\circ}])$ and $-\partial\xi|_{T} \not\in \argmin(g_T[+q'])$, 
we have that $q' \neq q\sp{\circ}$ and consequently 
$0 \le q'(v\sp{*}) < q\sp{\circ}(v\sp{*})$. 
Then, 
\eqref{EQone} follows from 
Lemma~\ref{lem:deg2}. 
\item[Proof for \eqref{EQmore}.]
Denote $\Delta = q\sp{\circ}(v\sp{*}) - q'(v\sp{*}) >0$.
Since
\begin{align*}
0 &< g_T[+q'](-\partial\xi|_{T}) - g_T[+q'](\eta)
\\
{}&{}
= \bigg( g_T[+q\sp{\circ}](-\partial\xi|_{T}) - g_T[+q\sp{\circ}](\eta)  \bigg)
 + \Delta\cdot \bigg( \eta(v\sp{*}) + \partial\xi(v\sp{*}) \bigg)
\\
{}&{}
\le 
\Delta\cdot \bigg( \eta(v\sp{*}) + \partial\xi(v\sp{*}) \bigg),
\end{align*} 
we have that
$\eta(v\sp{*}) \ge -\partial\xi(v\sp{*})+1 = 2$.
\end{description}

For $\hat{\eta} \in \ZZ_+\sp{T}$ defined by 
\begin{align*}
\hat{\eta}(v)=
\begin{cases}
1          & (v = v\sp{*}),\\
\eta(v)  & (v \in T \setminus\{v\sp{*}\}),
\end{cases}
\end{align*}
it holds that 
\begin{align*}
g_T[+q\sp{\circ}](\hat{\eta}) 
{}&{}=  g_T[+q'](\eta) - q'(v\sp{*}) (\eta(v\sp{*}) - 1) + \Delta \\
{}&{}\le g_T[+q'](\eta) + \Delta \\
{}&{}<   g_T[+q'](-\partial\xi|_{T}) + \Delta \\
{}&{}=   g_T[+q\sp{\circ}](-\partial\xi|_{T}), 
\end{align*}
where \eqref{EQone} and \eqref{EQmore} are used.
This contradicts $-\partial\xi|_{T} \in \argmin (g_T[+q\sp{\circ}]) $. 
Thus 
we have shown
$-\partial\xi|_{T} \in \argmin(g_T[+q'])$
in (iii).
This completes the proof of the claim.
\end{proof}

By the above claim, 
we can reduce the number of arcs violating \eqref{EQtight}
by modifying $(p,q)=(p\sp{\circ},q\sp{\circ})$ to $(p,q)=(p',q')$, 
while maintaining the optimality. 
By repeating such modifications we eventually arrive at the situation where 
\eqref{EQtight} holds for every $a = uv \in A[S,T]$. 
This completes the proof for Lemma~\ref{lem:tight}. 
\end{proof}

In what follows, we assume that
$\xi$ is an optimal flow and 
$(p,q)$ is an optimal potential satisfying the condition~\eqref{EQtight} 
in Lemma~\ref{lem:tight}. 
We construct optimal solutions for (P) and (D)
by considering minimum-weight arborescence problems 
in auxiliary directed graphs
and using well-known results on the linear programming formulation 
of the minimum-weight arborescence problem.

Let $D_T= (V_T, A_T)$ be a directed graph
with arc weight $w' \in \ZZ\sp{A_T}$ defined as follows: 
\begin{align*}
V_T = \{r_T\} \cup T, 
\quad
A_T = \{r_T v \colon v \in T\} \cup A[T], 
\quad
w'(uv) = 
\begin{cases}
q(v)   & (u = r_T), \\
w(uv)  & (u \in T), 
\end{cases}
\end{align*}
where $r_T$ is a newly introduced additional vertex. 
For any $r_T$-arborescence $\tilde{B}_T$ in $D_T$,
$B_T = \tilde{B}_T \cap A[T] = \tilde{B}_T[T]$ is a branching in $D[T]$
with $R(B_T) = \{ v \in T \colon r_T v \in \tilde{B}_T \}$.
Conversely, for 
any branching $B_T$ in $D[T]$,
$\tilde{B}_T = B_T \cup \{ r_T v \colon v \in R(B_T) \}$
is an $r_T$-arborescence in $D_T$.

\begin{lemma}
\label{LEMarboRt}
There exists in $D_T$ 
a minimum-weight $r_T$-arborescence $\tilde{B}_T$ 
such that $R(\tilde{B}_T[T]) = \suppp(-\partial\xi|_{T})$.
\end{lemma}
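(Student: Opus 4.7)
The plan is to exhibit a concrete candidate $\tilde{B}_T^{*}$ coming from the optimal flow $\xi$, compute its $w'$-weight, and use the optimality condition (iii) of Theorem~\ref{THpotexist} together with Lemma~\ref{lem:deg2} to verify minimality among all $r_T$-arborescences in $D_T$.

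Write $U\sp{*} = \suppp(-\partial\xi|_T)$. By definition of $g_T$, there exists a branching $B_T\sp{*}$ in $D[T]$ with $R(B_T\sp{*}) = U\sp{*}$ and $w(B_T\sp{*}) = g_T(-\partial\xi|_T)$. Set
\[
\tilde{B}_T\sp{*} = B_T\sp{*} \cup \{r_T v \colon v \in U\sp{*}\},
\]
which is an $r_T$-arborescence in $D_T$ satisfying $R(\tilde{B}_T\sp{*}[T]) = U\sp{*}$ by construction. A direct computation from the definition of $w'$ gives
\[
w'(\tilde{B}_T\sp{*}) = g_T(-\partial\xi|_T) + \sum_{v \in U\sp{*}} q(v).
\]

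The crucial intermediate step is to match this quantity to $g_T[+q](-\partial\xi|_T)$. Since $g_T(\eta)$ depends only on $\suppp(\eta)$, we have $g_T(-\partial\xi|_T) = g_T(\chi_{U\sp{*}})$. Moreover, for every $v \in U\sp{*}$ with $-\partial\xi(v) \geq 2$, Lemma~\ref{lem:deg2} applied to condition~(iii) of Theorem~\ref{THpotexist} yields $q(v) = 0$; hence
\[
\sum_{v \in T} q(v)\cdot (-\partial\xi|_T)(v) = \sum_{v \in U\sp{*}} q(v),
\]
and consequently $w'(\tilde{B}_T\sp{*}) = g_T[+q](-\partial\xi|_T) = g_T[+q](\chi_{U\sp{*}})$.

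For the lower bound, let $\tilde{B}_T$ be any $r_T$-arborescence in $D_T$, put $B_T = \tilde{B}_T[T]$, and set $U = R(B_T) = \{v \in T \colon r_T v \in \tilde{B}_T\}$. Then
\[
w'(\tilde{B}_T) = w(B_T) + \sum_{v \in U} q(v) \geq g_T(\chi_U) + q \cdot \chi_U = g_T[+q](\chi_U),
\]
and $-\partial\xi|_T \in \argmin g_T[+q]$ from Theorem~\ref{THpotexist}(iii) gives $g_T[+q](\chi_U) \geq g_T[+q](-\partial\xi|_T) = w'(\tilde{B}_T\sp{*})$. Hence $\tilde{B}_T\sp{*}$ is a minimum-weight $r_T$-arborescence with the required root structure.

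The only non-routine step is the equality $g_T[+q](-\partial\xi|_T) = g_T[+q](\chi_{U\sp{*}})$: although $-\partial\xi|_T$ may have entries exceeding $1$ (so it is not itself a characteristic vector), Lemma~\ref{lem:deg2} forces $q$ to vanish precisely on those coordinates, which is exactly what converts the $w'$-weight of $\tilde{B}_T\sp{*}$ into the optimal value of $g_T[+q]$. I expect this to be the main conceptual point to emphasize; the rest is a packaging of the arborescence/branching bijection and the definition of $g_T$.
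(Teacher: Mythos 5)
Your proof is correct and follows essentially the same route as the paper: you use the branching/arborescence correspondence, reduce the minimization of $g_T[+q]$ to $\{0,1\}$-vectors via Lemma~\ref{lem:deg2}, and invoke condition~(iii) of Theorem~\ref{THpotexist}. The only difference is presentational—you make explicit the equality $g_T[+q](-\partial\xi|_T)=g_T[+q](\chi_{U\sp{*}})$ that the paper handles implicitly when it says one may assume $\eta\in\{0,1\}\sp{T}$.
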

\begin{proof}
By the correspondence between $r_T$-arborescences in $D_T$
and branchings in $D[T]$ described above, 
the minimum-weight $r_T$-arborescence problem in $D_T$
with respect to $w'$ 
is equivalent to minimizing
$w(B_T) + \sum_{v \in R(B_T)} q(v)$ over branchings $B_T$ in $D[T]$.
On the other hand, in minimizing $g_T[+q](\eta)$, we may assume $\eta \in \{0,1\}\sp{T}$
by Lemma~\ref{lem:deg2},
and for $\eta = \chi_X$ with $X \subseteq T$, 
the value of $g_T[+q](\chi_X)$ is equal 
to the minimum of 
$w(B_T) + \sum_{v \in X} q(v)$
 for a branching $B_T$ in $D[T]$ satisfying $R(B_T) = X$. 
Since $-\partial\xi|_{T} \in \argmin g_T[+q]$, 
there exists 
a minimum-weight branching $B_T$ in $D[T]$ satisfying $R(B_T) = \suppp(-\partial\xi|_{T})$. 
Then the corresponding $r_T$-arborescence
$\tilde{B}_T = B_T \cup \{ r_T v \colon v \in R(B_T) \}$
is a minimum-weight $r_T$-arborescence
such that $R(\tilde{B}_T[T]) = \suppp(-\partial\xi|_{T})$.
\end{proof}

The following problems (P$'$) and (D$'$), 
whose variables are $x' \in \RR\sp{A_T}$ and $\rho \in \RR\sp{2\sp{T}}$, 
are a linear programming formulation of the minimum-weight $r_T$-arborescence problem 
 in $D_T$ and its dual program, 
respectively \cite{Edm67,Sch03}:
\begin{alignat}{3}
\notag
\mbox{(P$'$)}\quad{}&{}\mbox{Minimize}     {}&{}&{}\sum_{a \in A_T} w'(a) x'(a) {}&{}&{}\\
&{}\mbox{subject to}\quad   
&{}&{}\sum_{a\in \delta\sp{-}v}x'(a) = 1 \quad{}&{}&{}(v \in T), \\
&{}{}&{}&{}\sum_{a\in \delta\sp{-}T'}x'(a) \ge 1 \quad{}&{}&{}(T' \subseteq T ,|T'| \ge 2), \\
&{}&{}&{}                           x'(a)\ge 0 \quad{}&{}&{}(a \in A_T).
\end{alignat}
\begin{alignat}{2}
\mbox{(D$'$)}\quad{}&{}\mbox{Maximize}{}&{}&{}
\sum_{v \in T} \rho(v) +  
\sum_{T'\subseteq T, \ |T'|\ge 2}
\rho(T') \notag\\
&{}\mbox{subject to}\quad   {}&{}&{}
\rho(v) + \sum_{T' \colon |T'|\ge 2, \  a \in \delta\sp{-}T'}\rho(T') \le w'(a) \quad(a=uv \in A_T), 
\label{arbdual1}
\\
&{}&{}&{}                           \rho(T') \ge 0 \qquad(T' \subseteq T , |T'| \ge 2).
\label{arbdual2}
\end{alignat}
The complementary slackness conditions for (P$'$) and (D$'$) are as follows:
\begin{align}
\label{EQarbx} 
&{}x'(a) >0 \Longrightarrow 
\rho(v) + \sum_{T' \colon |T'|\ge 2, \  a \in \delta\sp{-}T'}\rho(T')= w'(a), \\
\label{EQarbrho} 
&{}\rho(T') > 0 \Longrightarrow \sum_{a\in \delta\sp{-}T'}x'(a) = 1,
\end{align}
where $a = uv \in A_T$ in \eqref{EQarbx} and
$T' \subseteq T$ with $|T'| \ge 2$ in \eqref{EQarbrho}.

It is known \cite{Edm67,Sch03} that
there exists an integer optimal solution $\rho\sp{*}$ for (D$'$) 
such that
$\rho\sp{*}(v)$ is nonnegative for all $v \in T$, i.e.,
\begin{align}
\label{EQrhovnonneg} 
 \rho\sp{*}(v) \ge 0  
\quad (v \in T).
\end{align}
For example, 
the arborescence algorithm of Edmonds~\cite{Edm67} finds an optimal solution $\rho\sp{*}$ such that 
$\rho\sp{*}(v) = \min\{w'(a) \colon  a = uv \}$ 
for every $v \in T$. 
Let $\rho\sp{*} \in \ZZ_+\sp{2\sp{T}}$ be an integral optimal solution for (D$'$) 
satisfying \eqref{EQrhovnonneg}. 
Also let $\tilde{B}_T$ be a minimum-weight $r_T$-arborescence in $D_T$
such that $R(\tilde{B}_T[T]) = \suppp(-\partial\xi|_{T})$
and $x'$ be the characteristic vector of this $\tilde{B}_T$;
cf.~Lemma~\ref{LEMarboRt}.

Similarly, on the $S$-side, 
we consider another directed graph $D_S= (V_S, A_S)$ 
with arc weight $w'' \in \ZZ\sp{A_S}$ defined as 
\begin{align*}
V_S = \{r_S\} \cup S, 
\quad
A_S = \{u r_S \colon u \in S\} \cup A[S], 
\quad
w''(uv) = 
\begin{cases}
-p(u)   & (v = r_S), \\
w(uv)  & (v \in S)
\end{cases}
\end{align*}
with a new vertex $r_S$.
We consider an arc subset such that the reversal of its arcs is an
$r_S$-arborescence.
Let $\tilde{B}_S$ be such an arc subset of minimum weight that satisfies
$R\sp{*}(\tilde{B}_S[S]) = \suppp(\partial\xi|_S)$.
Also let 
$\pi\sp{*} \in \ZZ_+\sp{2\sp{S}}$ 
be an integral optimal solution for the associated dual problem
satisfying $\pi\sp{*}(u) \ge 0$ for all $u \in S$.

Using $\pi\sp{*}$ and $\rho\sp{*}$ above
as well as $F = \{a \in A[S,T] \colon \xi(a) =1\}$,
define
$x\sp{*} \in \{0,1\}\sp{A}$, 
$y\sp{*}\in \ZZ\sp{2\sp{S}}$, and
$z\sp{*}\in \ZZ\sp{2\sp{T}}$ by 
\begin{align}
\label{EQxstar}
& x\sp{*} = \chi_{F \cup \tilde{B}_S[S] \cup \tilde{B}_T[T]},
 \\
\label{EQystar}
& y\sp{*}(S') = \pi\sp{*}(S')
\quad (\emptyset \not= S' \subseteq S),
 \\
\label{EQzstar}
& z\sp{*}(T') = \rho\sp{*}(T')
\quad(\emptyset \not= T' \subseteq T).
\end{align}
We prove that $x\sp{*}$ and $(y\sp{*},z\sp{*})$ 
are optimal solutions for (P) and (D), respectively.

\begin{lemma}
\label{lem:feasible}
$x\sp{*}$ and 
$(y\sp{*},z\sp{*})$ 
defined in \eqref{EQxstar}, \eqref{EQystar}, and \eqref{EQzstar}, respectively, 
are feasible for {\rm (P)} and {\rm (D)}, 
respectively. 
\end{lemma}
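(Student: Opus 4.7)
The plan is to verify feasibility of the primal and dual separately, exploiting the combinatorial meaning of the construction on the primal side and the arborescence duality on the dual side.

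For primal feasibility, I would first argue that $B^{*} = F \cup \tilde{B}_{S}[S] \cup \tilde{B}_{T}[T]$ is actually an $S$-$T$ bibranching, whence $x^{*}=\chi_{B^{*}}$ satisfies all cut inequalities of (P). By the choice of $\tilde{B}_{S}$ and $\tilde{B}_{T}$, $B^{*}[S]=\tilde{B}_{S}[S]$ is a cobranching in $D[S]$ with $R^{*}(\tilde{B}_{S}[S])=\suppp(\partial\xi|_{S})$, and $B^{*}[T]=\tilde{B}_{T}[T]$ is a branching in $D[T]$ with $R(\tilde{B}_{T}[T])=\suppp(-\partial\xi|_{T})$. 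Since $F$ consists of exactly those $a\in A[S,T]$ with $\xi(a)=1$, the set $F$ covers every vertex of $R^{*}(\tilde{B}_{S}[S])$ (each such $u\in S$ has $\partial\xi(u)>0$, so some arc of $F$ leaves $u$) and every vertex of $R(\tilde{B}_{T}[T])$ (symmetrically). By the equivalent characterization of $S$-$T$ bibranchings given in Section~\ref{SECmsbm}, $B^{*}$ is an $S$-$T$ bibranching, and hence $x^{*}=\chi_{B^{*}}$ satisfies the cut constraints of (P).

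For dual feasibility, nonnegativity of $y^{*}$ and $z^{*}$ is immediate from $\pi^{*}\ge 0$ and $\rho^{*}\ge 0$, together with the hypothesis $\pi^{*}(u)\ge 0$, $\rho^{*}(v)\ge 0$ for singletons. The constraint \eqref{dual1} splits naturally into three cases according to whether $a\in A[T]$, $a\in A[S]$, or $a\in A[S,T]$. For $a=uv\in A[T]$, only the $z$-terms can contribute, and the sum $\rho^{*}(v)+\sum_{T'\colon|T'|\ge 2,\ a\in\delta^{-}T'}\rho^{*}(T')$ is exactly the left-hand side of \eqref{arbdual1} for (D$'$) applied to $a\in A_{T}$; since $w'(a)=w(a)$, this gives $\le w(a)$. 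The case $a\in A[S]$ is handled symmetrically by the dual of the reversed-arborescence problem on $D_{S}$.

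The main obstacle, as expected, is the mixed case $a=uv\in A[S,T]$, where both the $y$- and $z$-summations contribute. Here I would first observe the key identities
\[
\sum_{S'\subseteq S,\ a\in\delta^{+}S'}y^{*}(S')=\sum_{S'\subseteq S,\ u\in S'}\pi^{*}(S'),\qquad
\sum_{T'\subseteq T,\ a\in\delta^{-}T'}z^{*}(T')=\sum_{T'\subseteq T,\ v\in T'}\rho^{*}(T').
\]
Next, I would apply the dual constraint \eqref{arbdual1} of (D$'$) at the auxiliary arc $r_{T}v\in A_{T}$ (where $w'(r_{T}v)=q(v)$), observing that $r_{T}v\in\delta^{-}T'$ holds precisely when $v\in T'$; this yields $\sum_{T'\subseteq T,\ v\in T'}\rho^{*}(T')\le q(v)$. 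Symmetrically, the dual constraint on the $S$-side at $ur_{S}\in A_{S}$ (with $w''(ur_{S})=-p(u)$) gives $\sum_{S'\subseteq S,\ u\in S'}\pi^{*}(S')\le -p(u)$. Adding these two inequalities bounds the left-hand side of \eqref{dual1} by $-p(u)+q(v)$, and the potential optimality conditions \eqref{EQoptpot1F}--\eqref{EQoptpot2F} (in either case $\xi(a)=0$ or $\xi(a)=1$, using \eqref{EQtight} in the latter) give $-p(u)+q(v)\le w(a)$, completing the verification.
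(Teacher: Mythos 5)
Your proposal is correct and follows essentially the same route as the paper: primal feasibility via the bibranching characterization of $F \cup \tilde{B}_S[S] \cup \tilde{B}_T[T]$, and dual feasibility by splitting \eqref{dual1} into the cases $a \in A[S]$, $a \in A[T]$, $a \in A[S,T]$, bounding the mixed case by $-p(u)+q(v) \le w(a)$ via the arborescence dual constraints and \eqref{EQoptpot2F}/\eqref{EQtight}. Your explicit use of the auxiliary arcs $r_T v$ and $u r_S$ in \eqref{arbdual1} is just a more carefully spelled-out version of the paper's shorthand ``$\le w'(a)=q(v)$''.
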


\begin{proof}
Since the arc set $F \cup \tilde{B}_S[S] \cup \tilde{B}_T[T]$ is a bibranching in $D=(V,A)$
by $R(\tilde{B}_T[T]) = \suppp(-\partial\xi|_{T})$ and
$R\sp{*}(\tilde{B}_S[S]) = \suppp(\partial\xi|_{S})$,
it is clear that $x\sp{*} = \chi_{F \cup \tilde{B}_S[S] \cup \tilde{B}_T[T]}$ is feasible for (P). 
As for $(y\sp{*}, z\sp{*})$, 
we first show that it satisfies \eqref{dual1}. 
For $a = uv \in A[S,T]$, 
by \eqref{EQoptpot2F} and Lemma~\ref{lem:tight}, we have that 
$w(a) + p(u) - q(v) \ge 0$, 
and hence 
\begin{align*}
\sum_{S' \colon a \in \delta\sp{+}S'}y\sp{*}(S') + 
\sum_{T' \colon a \in \delta\sp{-}T'}z\sp{*}(T') 
{}&{}= \sum_{S' \colon a \in \delta\sp{+}S'}\pi\sp{*}(S') + \sum_{T' \colon a \in \delta\sp{-}T'}\rho\sp{*}(T') \notag \\
{}&{}\le -p(u) + q(v) 
\\
{}&{}\le w(a) , \notag
\end{align*}
where
$\sum_{T' \colon a \in \delta\sp{-}T'}\rho\sp{*}(T') 
\le w'(a) = q(v)$ by \eqref{arbdual1} and 
the definition of $w'$,
and similarly
$\sum_{S' \colon a \in \delta\sp{+}S'}\pi\sp{*}(S') \le w''(a) = -p(u)$. 
For $a\in A[T]$, 
it follows from \eqref{arbdual1} that 
\begin{align*}
\sum_{S' \colon a \in \delta\sp{+}S'}y\sp{*}(S') + 
\sum_{T' \colon a \in \delta\sp{-}T'}z\sp{*}(T') = 
\sum_{T' \colon a \in \delta\sp{-}T'}\rho\sp{*}(T')  \le w'(a)=w(a). 
\end{align*}
The case of $a\in A[S]$ can be treated similarly.

Constraint~\eqref{dual3} is satisfied by 
\eqref{arbdual2} and \eqref{EQrhovnonneg}.
Similarly \eqref{dual2} is satisfied. 
\end{proof}

\begin{theorem}
$x\sp{*}$ and 
$(y\sp{*},z\sp{*})$ 
defined in \eqref{EQxstar}, \eqref{EQystar}, and \eqref{EQzstar}, respectively, 
are optimal solutions for {\rm (P)} and {\rm (D)}, respectively. 
\end{theorem}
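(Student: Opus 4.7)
The plan is to establish optimality by verifying weak LP duality: since feasibility of $x\sp{*}$ and $(y\sp{*},z\sp{*})$ is already given by Lemma~\ref{lem:feasible}, it suffices to show that the primal objective $w(x\sp{*})$ equals the dual objective $\sum_{S'}y\sp{*}(S') + \sum_{T'}z\sp{*}(T')$.

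First, I decompose the primal objective according to the three components of the chosen bibranching:
\[
w(x\sp{*}) = w(F) + w(\tilde{B}_S[S]) + w(\tilde{B}_T[T]).
\]
For the cross edges, Lemma~\ref{lem:tight} gives $w(a) = q(v) - p(u)$ for every $a=uv \in F$, so
\[
w(F) = \sum_{v \in T} q(v)\cdot (-\partial\xi(v)) - \sum_{u \in S} p(u)\cdot \partial\xi(u).
\]
Here the key observation, via Lemma~\ref{lem:deg2} applied to $-\partial\xi|_T \in \argmin g_T[+q]$ and $\partial\xi|_S \in \argmin g_S[-p]$, is that $q(v)=0$ whenever $-\partial\xi(v)\ge 2$ and $p(u)=0$ whenever $\partial\xi(u)\ge 2$. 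Hence these sums collapse to
\[
w(F) = \sum_{v \in \suppp(-\partial\xi|_T)} q(v) - \sum_{u \in \suppp(\partial\xi|_S)} p(u).
\]

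Next I exploit the LP duality between the auxiliary minimum-weight $r_T$-arborescence problem (P$'$) and its dual (D$'$). Since $x'$ is the characteristic vector of $\tilde{B}_T$ and $\rho\sp{*}$ is an optimal dual solution, strong duality gives
\[
w'(\tilde{B}_T) = w(\tilde{B}_T[T]) + \sum_{v \in \suppp(-\partial\xi|_T)} q(v) = \sum_{\emptyset \neq T' \subseteq T} \rho\sp{*}(T'),
\]
using $w'(r_T v)=q(v)$ and $R(\tilde{B}_T[T]) = \suppp(-\partial\xi|_T)$. The analogous argument on the $S$-side yields
\[
w(\tilde{B}_S[S]) - \sum_{u \in \suppp(\partial\xi|_S)} p(u) = \sum_{\emptyset \neq S' \subseteq S} \pi\sp{*}(S').
\]

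Adding the three identities and rearranging, the $p$- and $q$-terms cancel exactly, leaving
\[
w(x\sp{*}) = \sum_{\emptyset \neq S' \subseteq S} \pi\sp{*}(S') + \sum_{\emptyset \neq T' \subseteq T} \rho\sp{*}(T') = \sum_{\emptyset \neq S' \subseteq S} y\sp{*}(S') + \sum_{\emptyset \neq T' \subseteq T} z\sp{*}(T').
\]
Combined with weak LP duality between (P) and (D) and the feasibility from Lemma~\ref{lem:feasible}, this proves optimality of both $x\sp{*}$ and $(y\sp{*}, z\sp{*})$. The main subtle point is the collapse of the summations in $w(F)$ to their supports; this is precisely where the specific choice of potential ensured by Lemma~\ref{lem:tight}, together with the structural vanishing property in Lemma~\ref{lem:deg2}, glues the two auxiliary arborescence dualities together into the full bibranching duality.
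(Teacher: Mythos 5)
Your proposal is correct: feasibility from Lemma~\ref{lem:feasible} plus equality of the (P)- and (D)-objective values does yield optimality of both $x\sp{*}$ and $(y\sp{*},z\sp{*})$, and each step of your computation checks out — the disjoint decomposition $w(x\sp{*})=w(F)+w(\tilde{B}_S[S])+w(\tilde{B}_T[T])$, the identity $w(a)=q(v)-p(u)$ on $F$ from \eqref{EQtight}, the collapse of $\sum_v q(v)(-\partial\xi(v))$ and $\sum_u p(u)\partial\xi(u)$ to their supports via Lemma~\ref{lem:deg2} applied with conditions (ii)--(iii) of Theorem~\ref{THpotexist}, and the two auxiliary strong-duality identities $w'(\tilde{B}_T)=\sum\rho\sp{*}$ and $w''(\tilde{B}_S)=\sum\pi\sp{*}$ (which rest on Edmonds' integrality of the arborescence LP, exactly the fact the paper also invokes when introducing (P$'$) and (D$'$)). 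The paper proves the theorem differently in form: it verifies the complementary slackness conditions \eqref{EQx}--\eqref{EQz} for (P) and (D) directly, using the complementary slackness conditions \eqref{EQarbx}--\eqref{EQarbrho} of the auxiliary problems arc by arc, and a case analysis ($|T'|\ge 2$ versus $|T'|=1$, and $v$ in or out of $\suppp(-\partial\xi|_T)$) together with Lemma~\ref{lem:deg2} to show $x\sp{*}(\delta\sp{-}T')=1$ whenever $z\sp{*}(T')>0$. Your aggregate duality-gap computation uses the same ingredients (the same auxiliary graphs, $\tilde{B}_S,\tilde{B}_T,\pi\sp{*},\rho\sp{*}$, Lemmas~\ref{lem:deg2} and \ref{lem:tight}) but trades the set-by-set slackness verification for a single objective-value bookkeeping, which is arguably slicker and avoids the case analysis; what it gives up is the explicit complementary-slackness certificate that the paper's argument produces, and it leans on strong duality of the auxiliary LPs rather than only their slackness conditions — an equivalent reliance, since both presuppose that $\chi_{\tilde{B}_T}$ (resp.\ $\chi_{\tilde{B}_S}$) is LP-optimal for the auxiliary primal.
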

\begin{proof}
By Lemma~\ref{lem:feasible}, 
it suffices to prove that 
$x\sp{*}$ and $(y\sp{*},z\sp{*})$ satisfy 
the complementary slackness conditions~\eqref{EQx}--\eqref{EQz}. 
To show \eqref{EQx},
assume $x\sp{*}(a)>0$.
For $a \in A[S,T]$, 
$x\sp{*}(a)>0$ means $x'(a)=\xi(a)=1$. 
Then, it follows from 
\eqref{EQarbx},
its counterpart for the $S$-side, and \eqref{EQtight}
that 
		\begin{align*}
		\sum_{S'\colon a \in \delta\sp{+}S'}y\sp{*}(S')
		+ \sum_{T'\colon a \in \delta\sp{-}T'}z\sp{*}(T') 
		=  w''(a) + w'(a) 
		= -p(u) + q(v)
		= w(a). 
		\end{align*} 
For $a \in A[T]$, 
$x'(a)=x\sp{*}(a) > 0$
implies that  
		$$\sum_{S'\colon a \in \delta\sp{+}S'}y\sp{*}(S')
		+ \sum_{T'\colon a \in \delta\sp{-}T'}z\sp{*}(T') 
		= \sum_{T'\colon a \in \delta\sp{-}T'}\rho\sp{*}(T') = w'(a) = w(a)$$
		by \eqref{EQarbx}. 
The case of $a \in A[S]$ can be treated similarly.

We next consider \eqref{EQz},
while noting that \eqref{EQy} can be shown similarly.
To show \eqref{EQz}, let $z\sp{*}(T') >0$,
where $\emptyset \not= T' \subseteq T$.
We are to show $x\sp{*}(\delta\sp{-}T')=1$.

If $|T'| \ge 2$, 
\eqref{EQarbrho} with $\rho\sp{*}(T') = z\sp{*}(T') > 0$ implies
		$|\tilde{B}_T \cap \delta\sp{-}T'|=1$ in $D_T$. 
		Denote the unique arc in  
		$\tilde{B}_T \cap \delta\sp{-}T'$ by $uv$.  
		For $v \in T \setminus \suppp (-\partial\xi|_{T})$, 
		it is clear that $x\sp{*}(\delta\sp{-}T') =1$,
	and hence \eqref{EQz} holds.
		For $v \in \suppp(-\partial\xi|_{T})$, 
		$x\sp{*}(\delta\sp{-}T')= -\partial\xi(v) \ge 2$ 
          would imply $q(v) =0$ by Lemma~\ref{lem:deg2},
whereas  
		$0 < z\sp{*}(T') = \rho\sp{*}(T') \le w'(a) = q(v)$;
		a contradiction. 
		Hence 
		$x\sp{*}(\delta\sp{-}T')= 1$ must hold.

		When $|T'|=1$,  we have $T' =\{v\}$ for some $v \in T$. 
		If 
		$v \not\in  \suppp(-\partial\xi|_{T})$, 
		then $x\sp{*}(\delta\sp{-}v) = 1$ holds since $\tilde{B}_T[T]$ is a branching in $D[T]$. 
		If $v \in \suppp(-\partial\xi|_{T})$, 
		then 
		again 
		$x\sp{*}(\delta\sp{-}T')= -\partial\xi(v) \ge 2$ would imply 
		$z\sp{*}(T') \le q(v)=0$ by Lemma~\ref{lem:deg2}, a contradiction.
		Hence 
		$x\sp{*}(\delta\sp{-}T')= 1$ must hold.
\end{proof}

\section*{Acknowledgement}
This work is supported by The Mitsubishi Foundation, 
CREST, JST, Grant Numbers JPMJCR14D2, JPMJCR1402, Japan, and 
JSPS KAKENHI Grant Numbers 16K16012, 26280001, 26280004.

\end{document}